\newcommand\indi[1]{{\mathds{1}}_{#1}}
\newtheorem{theorem}{Theorem}[section]
\newtheorem{proposition}[theorem]{Proposition}
\newtheorem{remark}[theorem]{Remark}
\newtheorem{hypothesis}[theorem]{Hypothesis}
\newcommand{\er}{\mathbb{R}}
\newcommand{\EE}{\mathbb{E}}
\newcommand{\Ww}{\mathbb{W}}
\newcommand{\bx}{\boldsymbol{q} }
\newcommand{\tbx}{\widetilde{\boldsymbol{q}}}
\newcommand{\tR}{\widetilde{R}}
\newcommand{\ty}{\widetilde{y}}
\newcommand{\tV}{\widetilde{V}}
\newcommand{\dyb}{\overline{\delta y}}
\newcommand{\bW}{ \boldsymbol{W} }
\newcommand{\PP}{\mathcal{P}}
\newcommand{\mes}{{(\pi)}}
\numberwithin{equation}{section}
\title{Clarification and complement to ``Mean-field description and
propagation of chaos in networks of Hodgkin-Huxley and FitzHugh-Nagumo neurons''}
\author[1]{Mireille Bossy}
\author[2]{Olivier Faugeras}
\author[1]{Denis Talay}
\affil[1]{TOSCA Laboratory, INRIA Sophia Antipolis -- M\'editerran\'ee, France}
\affil[2]{NeuroMathComp Laboratory, INRIA Sophia Antipolis -- M\'editerran\'ee, France}
\date{\today}
\begin{document}
\maketitle
\begin{abstract}
In this note, we clarify the well-posedness of the limit equations to the mean-field $N$-neuron  models proposed in \cite{baladron-al} and we prove the associated propagation of chaos property. We also complete the modeling issue in \cite{baladron-al} by discussing the well-posedness of the stochastic differential equations which govern the behavior of the ion channels and the amount of available neurotransmitters.
\end{abstract} 

\noindent
\textbf{Key words: }mean-field limits, propagation of chaos, stochastic differential
equations, neural networks, neural assemblies,  Hodgkin-Huxley neurons, FitzHugh-Nagumo neurons. 

\smallskip\noindent
\textbf{AMS  Subject classification: }60F90, 60H99, 60K35, 92B20, 82C32.

\section{Introduction}\label{sec:Intro}
 
The paper Baladron et al.~\cite{baladron-al} studies quite general networks of neurons and aims to prove that  these networks propagate chaos in the sense originally developed by Sznitman~\cite{sznitman} after the seminal work of Kac on mean field limits and McKean's works~\cite{McKean} on diffusion processes propagating chaos. As observed by the authors, the membrane potentials of the neurons in the networks they consider are described by interacting stochastic particle dynamics. 
The coefficients of these McKean--Vlasov systems  
are not globally Lipschitz. Therefore the
classical results of the propagation of chaos theory do not directly apply and a specific analysis needs to be performed. The main theorems (existence and uniqueness of the limit system when the 
number of particles tends to infinity, propagation of chaos property) 
are stated under a fairly general hypothesis on the coefficients.
Unfortunately the proof in \cite{baladron-al} p. 24-25 involves an erroneous management of hitting times in combination with a truncation technique, and the limit system may not be well-defined under the too general hypothesis used by the authors. Indeed, the following 
equation, where $\phi$ is a bounded  and locally Lipschitz function and $Z_0$ is a random variable, satisfies the hypothesis
made in \cite{baladron-al} p.15-16: 
$$Z_t = Z_0 + \int_0^t \EE \phi(Z_s) ~ds. $$ 
However Scheutzow exhibited examples of a function $\phi$ and initial condition $Z_0$  for which many solutions do exist: see Counterexample 2 in ~\cite{scheutzow} and the remark 
which follows it \footnote{\normalsize Similarly, Counterexample 1 in \cite{scheutzow} contradicts  the results on neuronal models claimed at the end of Section 1 and in Appendix B of the paper~\cite{touboul:14}.}.

This note restricts the neuron model to the much used variants of the FitzHugh-Nagumo and Hodgkin-Huxley models. Our objective is two-fold:
first, we discuss a modeling issue on the diffusion coefficients
of the equations describing the proportions of open and closed
channels that guarantees that these variables do not
escape from the interval $[0,1]$. This was not completely achieved
in~\cite{baladron-al} and can be seen as a complement to this paper.

Second, we give a rigorous proof of the propagation of chaos property.

\section{The models}\label{sec:the-models}
In this section we present and discuss the stochastic models considered in  Baladron et al.~\cite{baladron-al} for the electrical
activity of $\bar{p}$ populations of neurons.  Each population has a label $\alpha$ and $N_\alpha$ elements.
	We denote by $\PP $ the set of the $\bar{p}$ population labels and by $N:=\sum_{\alpha\in\PP } N_\alpha$ the total number of neurons.

Given the neuron $i$ in a population $\alpha$, 
the stochastic time evolution of the membrane potential is denoted by $(V^{i}_t)$. In the case of the Hodgkin-Huxley model, the Sodium and Potassium activation variables, which represent proportions of open gates along  the neuron~$i$
are respectively denoted by $(n^{i}_t)$, $(m^{i}_t)$. 
The Sodium inactivation variable, which is also a proportion of open gates, is denoted by
$(h^{i}_t)$. 
In the case of the Fitzhugh-Nagumo model, the recovery variable is denoted by $(w^i_t)$. Both models feature synaptic variables $(y^{i}_t)$ which represent the proportion of available neurotransmitters at the synapses of neuron $i$.

The synaptic connections between neurons are assumed to be chemical in \cite{baladron-al}. We make the same assumption here.
This implies that the synaptic current $I_{ij}^{\text{syn}}$ from the presynaptic neuron $j$  in population $\gamma$ to the
postsynaptic neuron $i$ in population $\alpha$ writes
\[
I_{ij}^{\text{syn}}(t)=-g_{ij}(t)(V^{i}_t-\overline{V}^{\alpha\gamma}),
\]
where $\overline{V}^{\alpha\gamma}$ is the synaptic reversal potential of the $j \to i$ synapse, assumed to be approximately constant across populations, and $g_{ij}(t)$ the electric conductance of that synapse.  Hence the postsynaptic neuron $i$ belongs to population $\alpha$ and the presynapic neuron $j$ to population $\gamma$. This conductance is the product of the maximal conductance, noted $J^{ij}_t$,  of the synapse by the proportion $y^{j}_t$ of neurotransmitters available at neuron $j$. Conductances are positive quantities.

The processes $(V^i_t,n^i_t,m^i_t,h^i_t,w^i_t,y^i_t)$ are defined by means of the stochastic differential
systems~\eqref{eq:N-neuron-simple-conductance}
or~\eqref{eq:N-neuron-sign-preserving} in the $N$-neuron model section below. The mean-field limit processes are defined in \eqref{eq:N-neuron-sign-preserving-limit} and \eqref{eq:N-neuron-simple-conductance-limit}. 
Well-posedness of those systems is postponed to Section~\ref{sec:systemes-bien-poses}.

\subsection{The $N$-neuron model}

The variants of the FitzHugh-Nagumo and Hodgin-Huxley dynamics
proposed in Baladron et al.~\cite{baladron-al} to model neuron networks are all of the two types below; the only differences concern
the algebraic expressions of the function $F_\alpha$ and the fact that
the Fitzhugh-Nagumo model does not depend on the variables
$(n_t^{i},m_t^{i},h_t^{i})$ but on the
recovery variable $(w^{i}_t)$ only. Conversely, the Hodgkin-Huxley model does not depend on $w^i_t$. In what follows we denote by $\bx$ the
vector $(n,m,h)$ of $\er^3$ in the case of the Hodgkin-Huxley model, 
and the real $(w)$   in the case of the Fitzhugh-Nagumo model.
We also note 
$(\bW^{i}_t) = (W^{i,V}_t,W^{i,y}_t,W^{i,n}_t,W^{i,m}_t,W^{i,h}_t)$,
independent Brownian motions ($1\leq i\leq N$).

Given a neuron $i$, the number $p(i)=\alpha$ denotes the type of the 
population it belongs to.

The equations~(\ref{eq:N-neuron-simple-conductance}) and
(\ref{eq:N-neuron-sign-preserving}) below are those studied in
\cite{baladron-al}. They correspond to two different models for the
maximum conductances. The first one does not respect the positivity
constraint while the second one guarantees that these quantities stay
positive.
In these equations, all quantities which
are not time indexed are constant parameters.

\paragraph{Simple maximum conductance variation.} ~

For $i$ and $j$ such that $p(i) = \alpha$ and $p(j)=\gamma$, 
the model assumes that $J^{ij}_t$ fluctuates around a mean $\overline{J}^{\alpha\gamma}$ according to a white noise with standard deviation $\sigma^J_{\alpha \gamma}$:
\[
J^{ij}_t=\overline{J}^{\alpha\gamma}+\sigma^J_{\alpha \gamma}\dfrac{dB^{i \gamma}_t}{dt}.
\]
For $(B^{i\gamma}_t)$ a family of independent Brownian motions, independent of the Brownian family $(\bW^i_t)$, the equations describing the dynamics of the state vector of neuron $i$ in population $\alpha$ write 
\begin{equation}\label{eq:N-neuron-simple-conductance}
\left\{
\begin{aligned}
&\mbox{for $i$ such that } p(i) = \alpha,  \mbox{ for }\bx = (w) \mbox{ or } (n,m,h), \\
dV^{i}_t &= F_\alpha(t, V^{i}_t , 
\bx^{i}_t) dt   
- \sum_{ \gamma \in \PP }  (V^{i}_t - \overline{V}^{\alpha \gamma})
\dfrac{\bar{J}^{\alpha\gamma}}{N_\gamma} 
\left(\sum_{j=1}^N 
\indi{\{p(j)=\gamma\}}y^{j}_t\right) dt \\
&\quad\quad 
- \sum_{ \gamma\in\PP }  (V^{i}_t - \overline{V}^{\alpha \gamma}) 
\dfrac{\sigma^J_{\alpha\gamma}}{N_\gamma} \left( 
\sum_{j=1}^N \indi{\{p(j) = \gamma\}}y^{j}_t\right) 
d B^{\gamma,i}_t + \sigma_\alpha dW^{i,V}_t, \\
dy^{i}_t &=  \left(a^\alpha_rS_\alpha(V^{i}_t) 
(1-y^{i}_t)  - a^\alpha_dy^{i}_t \right) dt 
+ \sqrt{ \left| a^\alpha_r S_\alpha(V^{i}_t) (1-y^{i}_t)  
+ a^\alpha_dy^{i}_t \right| } \; \chi(y^{i}_t) 
dW^{i,y}_t, 
\end{aligned}
\right.
\end{equation}
coupled with
\begin{equation} \label{eq:N-neuron-w}
dw^{i}_t = c_\alpha(V^{i}_t + a_\alpha - b_\alpha 
w^{i}_t) dt
\end{equation}
or
\begin{align} \label{eq:N-neuron-x}
\begin{aligned}
dx^{i}_t  &= \left( \rho_{x}(V^{i}_t)
(1-x^{i}_t) 
- \zeta_{x}(V^{i}_t)x^{i}_t\right) dt + \sqrt{| \rho_{x}(V^{i}_t)(1-x^{i}_t) 
+ \zeta_{x}(V^{i}_t )
x^{i}_t|} \;\chi(x^{i}_t) dW^{i,x}_t \\
&\qquad\mbox{ for } x=n,m,h.
\end{aligned}
\end{align}
The reader may wonder about the reason for the square root term and the function $\chi$ in the diffusion coefficient of the SDE for the processes $x^i$ and $y^i$. 
The square root arises from the fact that this SDE is a Langevin approximation to a stochastic hybrid, or piecewise deterministic, model of the ion channels. There is a finite (albeit large) number of such ion channels and each of them can be modeled as jump Markov processes coupled to the membrane potential. Altogether ion channels and membrane potentials are described by a piecewise deterministic Markov process which, as shown for example in \cite{wainrib:06}, can be approximated by the solution to the SDE shown in \eqref{eq:N-neuron-simple-conductance} and \eqref{eq:N-neuron-sign-preserving}. Hypothesis \ref{hypo:coeff}-(i) below on the function~$\chi$ implies
	that the processes $x^i$ and $y^i$ are valued in $[0,1]$:
	See Section~\ref{sec:systemes-bien-poses}.

\paragraph{Sign-preserving maximum conductance variation.} 

\begin{equation}\label{eq:N-neuron-sign-preserving}
\left\{
\begin{aligned}
& \mbox{For $i$ such that $p(i)=\alpha$,}\mbox{ for }\bx = (w) \mbox{ or } (n,m,h), \\
dV^{i}_t &= F_\alpha(t, V^{i}_t , \bx^{i}_t) dt   
- \sum_{ \gamma\in\PP }  (V^{i}_t - \overline{V}^{\alpha \gamma})
\dfrac{J_t^{i\gamma}}{N_\gamma} \left( \sum_{j=1}^N 
\indi{\{p(j) = \gamma\}} y^{j}_t\right) dt  + \sigma_\alpha dW^{i,V}_t, \\
d J^{i \gamma}_t &= \theta_{\alpha\gamma}
\left( \bar{J}^{\alpha\gamma} - J^{i \gamma}_t \right) dt 
+ \sigma^{J}_{\alpha\gamma}\sqrt{J^{i \gamma}_t} 
dB^{i\gamma}_t \ \mbox{for all }\gamma\in\PP , \\
dy^{i}_t &=  \left(a^\alpha_rS_\alpha(V^{i}_t) 
(1-y^{i}_t)  - a^\alpha_dy^{i}_t \right) dt 
+ \sqrt{ \left|a^\alpha_rS_\alpha(V^{i}_t) (1-y^{i}_t) 
+ a^\alpha_dy^{i}_t \right| } \; \chi(y^{i}_t) 
dW^{i,y}_t, 
\end{aligned}
\right.
\end{equation}
coupled with \eqref{eq:N-neuron-w} or \eqref{eq:N-neuron-x}

\subsection{The mean-field limit models} 

When making the $N_\alpha$s tend to infinity,  the linear structure of the above $N$-neuron models w.r.t. the $(y^{i}_t)$, the linear structure of
the dynamics of the $(y^{i}_t)$, and the mutual independence of the Brownian motions $(B^{i,\gamma}_t)$, $(W^{j,y}_t)$,  lead one to mean-field dynamics. 
The limit processes $(V^\alpha_t, y^\alpha_t, n^\alpha_t, m^\alpha_t, h^\alpha_t, w^\alpha_t, \alpha\in\PP )$ are solutions to the SDEs \eqref{eq:N-neuron-simple-conductance-limit} and \eqref{eq:N-neuron-sign-preserving-limit}
where $(B^{\alpha\gamma}_t,W^{\alpha,V}_t,
W^{\alpha,n}_t,W^{\alpha,m}_t,W^{\alpha,h}_t, 
\alpha\in\PP )$ denote independent Brownian motions. 

\paragraph{Simple maximum conductance variation.}  

For all $\alpha$ in $\PP $,

\begin{equation}\label{eq:N-neuron-simple-conductance-limit}
\left\{
\begin{aligned}
dV^\alpha_t &= F_\alpha(t, V^\alpha_t , \bx^\alpha_t) dt   
- \sum_{ \gamma\in\PP }  (V^\alpha_t - \overline{V}^{\alpha \gamma})
\bar{J}^{\alpha\gamma}  \EE[y^\gamma_t]  dt  
- \sum_{ \gamma\in\PP } (V^\alpha_t - \overline{V}^{\alpha \gamma}) 
\sigma^J_{\alpha\gamma} \EE[y^\gamma_t]
d B^{\alpha\gamma}_t + \sigma_\alpha dW^{\alpha,V}_t, \\
dy^{\alpha}_t &=  \left(a^\alpha_rS_\alpha(V^{\alpha}_t) 
(1-y^{\alpha}_t)  - a^\alpha_dy^{\alpha}_t \right) dt 
+ \sqrt{ \left| a^\alpha_r S_\alpha(V^{\alpha}_t) (1-y^{\alpha}_t)  
	+ a^\alpha_dy^{\alpha}_t \right| } \; \chi(y^{\alpha}_t) 
dW^{\alpha,y}_t, 
\end{aligned}
\right.
\end{equation}
coupled with
\begin{equation}\label{eq:N-neuron-limit-w}
dw^\alpha_t = c_\alpha(V^\alpha_t + a_\alpha 
- b_\alpha w^\alpha_t) dt
\end{equation}
or
\begin{align}\label{eq:N-neuron-limit-x}
\begin{aligned}
dx^\alpha_t  
&= \left(\rho_{x}(V^\alpha_t)(1-x^\alpha_t) 
- \zeta_{x}(V^\alpha_t )x^\alpha_t\right) dt 
+ \sqrt{\displaystyle | \rho_{x}(V^\alpha_t)(1-x^\alpha_t) 
+ \zeta_{x}(V^\alpha_t )x^\alpha_t|} \;\chi(x^\alpha_t) 
dW^{\alpha,x}_t,\\
& \qquad\mbox{ for } x=n,m,h, 
\end{aligned}
\end{align}
where again $\bx^\alpha_t$ stands for $(w^{\alpha}_t)$ in the Fitzhugh-Nagumo model and  for $(n^{\alpha}_t, m^{\alpha}_t,h^\alpha_t)$ in the Hodgin-Huxley model. 

Notice that the diffusion coefficients of the
$(y^{\alpha}_t)$ play no role in the above mean--field 
dynamics since one readily sees that
\begin{align} \label{expression-overline-y-gamma}
\begin{aligned}
 \EE[y^\alpha_t] &= \EE [y_0^{\alpha}] \exp\left\{ - a^\alpha_d t -  a_r^\alpha\int_0^t  
\EE[S_\alpha(V^\alpha_\theta)] d\theta \right\}\\
& \qquad + \int_0^t a_r^\alpha \EE[S_\alpha(V^\alpha_s)]\exp\left\{ - a^\alpha_d (t-s)  -  a_r^\alpha\int_s^t  
\EE[S_\alpha(V^\alpha_\theta)] d\theta \right\} ds,  \ \mbox{for }\alpha\in\PP .
\end{aligned}
\end{align}
\paragraph{Sign-preserving maximum conductance variation.} 
With the same notation, for  all $\alpha$ in $\PP $, 
\begin{equation} \label{eq:N-neuron-sign-preserving-limit}
\left\{
\begin{aligned}
dV^\alpha_t &= F_\alpha(t, V^\alpha_t , \bx^\alpha_t) dt   
- \sum_{ \gamma\in\PP } (V^\alpha_t - \overline{V}^{\alpha \gamma}) 
 J_t^{\alpha\gamma} \EE[y_t^{\gamma}]  dt  
+ \sigma_\alpha dW^{\alpha,V}_t, \\
d J^{\alpha\gamma}_t &= \theta_{\alpha\gamma}
\left( \bar{J}^{\alpha\gamma} 
- J^{\alpha\gamma}_t \right) dt + \sigma^{J}_{\alpha\gamma}  
\sqrt{J^{\alpha\gamma}_t} dB^{\alpha\gamma}_t \ \mbox{for all }\gamma\in\PP ,  \\
dy^{\alpha}_t &=  \left(a^\alpha_rS_\alpha(V^{\alpha}_t) 
(1-y^{\alpha}_t)  - a^\alpha_dy^{\alpha}_t \right) dt 
+ \sqrt{ \left| a^\alpha_r S_\alpha(V^{\alpha}_t) (1-y^{\alpha}_t)  
	+ a^\alpha_dy^{\alpha}_t \right| } \; \chi(y^{\alpha}_t) 
dW^{\alpha,y}_t, 
\end{aligned}
\right.
\end{equation}
coupled with \eqref{eq:N-neuron-limit-w} or \eqref{eq:N-neuron-limit-x}

As in the simple maximum conductance variation case,
the diffusion coefficients of the
$(y^{\alpha}_t)$ play no role in the above mean--field 
dynamics and $\EE[y^\alpha_t]$ is given by~
\eqref{expression-overline-y-gamma}.

\subsection{Hypotheses}

Our hypotheses on the coefficients of the neuron models are the following.

\begin{hypothesis} \label{hypo:coeff}

\item{\bf \textit{(i)} On the ion channel models.}
The function $\chi$ is  bounded Lipschitz continuous with compact support included in  the interval $(0,1)$. 

The functions $\rho_{x}$, $\zeta_{x}$ are strictly positive, Lipschitz, and bounded. 

\item{\bf \textit{(ii)} On the chemical synapse model. }
The functions $S_\alpha$  are of the sigmoid type, that is, 
$S_\alpha(v) = C / (1 + \exp( -\lambda (v - \delta)))$ 
with  suitable positive parameters $C$, $\lambda$, $\delta$ depending on $\alpha$. The parameters $a^\alpha_r$, $a^\alpha_d$ are also positive.

\item{\bf \textit{(iii)} On the membrane model. }
The drift terms $F_\alpha$ are continuous, one-sided
Lipschitz  w.r.t. $v$ and Lipschitz w.r.t. $\bx$. More precisely, there 
exist a positive real number $L$ and a positive map $M(v,v')$
such that for all $t\in [0,T]$, for all $\bx$, $\bx'$  in $\er^3$ or
$\er$, $v$ , $v'$ in $\er$, 
\begin{equation} \label{cond:one-sided-lipschitz}
\begin{split}
(F_\alpha(t,v,\bx) - F_\alpha(t,v',\bx)) (v-v') 
&\leq L(v - v')^2 - M(v,v') (v - v')^2, \\
|F_\alpha(t,v,\bx) - F_\alpha(t,v,\bx')| &\leq L\|\bx-\bx'\|.
\end{split}
\end{equation}

\item{\bf \textit{(iv)} The initial conditions} 
$V^{i}_0$, $J^{i\gamma}_0$, $y^{i}_0$, $w^{i}_0$,
$n^{i}_0$, $m^{i}_0$, $h^{i}_0$
are i.i.d. random variables 
with the same law as  $V^{\alpha}_0$, $J^{\alpha\gamma}_0$, $y^{\alpha}_0$, $w^{\alpha}_0$,
$n^{\alpha}_0$, $m^{\alpha}_0$, $h^{\alpha}_0$, when $p(i)=\alpha$. 
We also assume that $V^{\alpha}_0$ and   $J^{\alpha\gamma}_0$ admit moments of any order. 

Moreover, the support of the law of $y^{\alpha}_0$ belongs to $[0,1]$, as well as the support of the laws of $n^{\alpha}_0$, $m^{\alpha}_0$, $h^{\alpha}_0$, for all $\alpha$ in $\PP $. 
The support of the law of $J^{\alpha\gamma}_0$ belongs to $(0,+\infty)$. 
\end{hypothesis}

\begin{remark}
For each neuron population $\alpha$, the function $S_\alpha$ represents the
concentrations of the transmitter
released into the synaptic cleft by a presynaptic spike.

Our hypothesis on the support of the function $\chi$ is
essential to force the proportion processes  
$(y^i_t)$, $(n^i_t)$, $(m^i_t)$, $(h^i_t)$ to live in the interval $(0,1)$.

In the case of the FitzHugh-Nagumo model,  for all $\alpha$ we have
$F_\alpha(v,\bx)  = - \tfrac{1}{3} v^3 + v -w$, so that we may choose  $L=1$ and  
$M(v,v') = \tfrac{1}{3} (|v| - |v'|)^2$.

Finally, the i.i.d hypothesis in part~(iv) is only used in 
Section~\ref{sec:systemes-bien-poses} where it allows simplifications,
but it can be relaxed to initial chaos by classical arguments in the propagation of chaos
literature.
\end{remark}

\begin{remark}
We notice that a one-sided Lipschitz condition also
appears in the work by Lu\c con and 
Stannat~\cite{L-S} for stochastic particle systems in random
environments in which they model one-population networks of Fitzhugh-Nagumo neurons. However their model does not include synaptic interactions as ours does. This has led us in particular to consider square root
diffusion coefficients in the dynamics of the synaptic variables
which, as shown below, requires specific arguments to prove that
the particle systems are well posed and propagate chaos.
\end{remark}

\begin{remark}
The boundedness of the functions $\rho_x$ and $\zeta_x$ is a
technical hypothesis which simplifies the analysis but can be relaxed,
provided that the above models have solutions which do not explode in
finite time. However this comfortable hypothesis is quite reasonable 
for neuron models since the membrane potentials take values 
between -100 mV and 100 mV. It is therefore implicitly understood 
that Lipschitz functions which reasonably fit data within 
the interval $[-100,100]$ are extended to bounded Lipschitz functions 
on the entire real line.
\end{remark}

\section{SDEs in rectangular cylinders}
\label{sec:systemes-contraints}

In the above $N$-neuron and limit models one requires that, for all
$i$, $\alpha$ and $x=n,m,h$,
the concentration processes $(x^{i}_t)$, $(x^\alpha_t)$,  
$(y^{i}_t)$ and $(y^\alpha_t)$ are well-defined
and take values in the interval $[0,1]$. Each one of these processes  is 
one-dimensional but not
Markov since the coefficients of their dynamics depend on $(V_t)$
and thus on all the components of the 
systems~(\ref{eq:N-neuron-sign-preserving})
and~(\ref{eq:N-neuron-simple-conductance}).
In this context, classical arguments for
one-dimensional Markov diffusion processes
such as Feller's tests or comparison theorems cannot be used to show 
that the processes do not exit from $[0,1]$. We thus need to
develop an ad hoc technique. Instead of focusing on the above neuron models,  we rather introduce a more general setting.

\medskip

Consider the stochastic differential equation
\begin{equation} \label{EDS-Xk-U}
\begin{cases}
\begin{split}
dX^{(1)}_t &= 
b_1(X_t,U_t)dt+A_1(X_t,U_t)dW^{(1)}_t, \\
\ldots &  \\
dX^{(k)}_t &= 
b_k(X_t,U_t)dt+A_k(X_t,U_t)dW^{(k)}_t, \\
dU_t &= \beta(X_t,U_t)dt+\Gamma(X_t,U_t)dW^\ast_t, 
\end{split}
\end{cases}
\end{equation}
where $(X_t) := (X^{(1)}_t,\ldots,X^{(k)}_t))$ is $\er^k$-valued, 
$(U_t)$ is $\er^d$-valued,  
$(W_t)= (W^{(1)}_t,\ldots,W^{(k)}_t)$ 
is a $\er^k$-valued $(\mathcal{F}_t)$-Brownian motion, and 
$(W^\ast_t)$  is a $\er^r$-valued Brownian motion.

We aim to exhibit  conditions on the coefficients 
of~(\ref{EDS-Xk-U})  which imply that the process $(X_t,U_t)$ takes 
values in the infinite rectangular cylinder $[0,1]^k\times\er^d$.

\begin{remark}
Many stochastic models of the type~(\ref{EDS-Xk-U}) which arise in
Physics need to satisfy the constraint that the process
$(X_t)$ is valued in the hypercube, say, $[0,1]^k$.
The algebraic expressions of the coefficients derived from physical
laws may be `naturally' defined  only for $x$ in $[0,1]^k$.  
However, one typically can construct continuous extensions of these
coefficients on the whole Euclidean space. These extensions may be 
arbitrarily chosen, provided that they satisfy the 
hypothesis~\ref{hypo:H} and that 
Equation~\eqref{EDS-Xk-U} has a weak solution which does not
explode in finite time.  In Section~\ref{sec:systemes-bien-poses} we 
develop this argument to show that all systems in 
Section~\ref{sec:the-models} are well-posed and that
the concentration processes are all valued in~$[0,1]$.
\end{remark}

Assume:
\begin{hypothesis}\label{hypo:H}
The locally Lipschitz continuous functions $b_i$, $A_i$, $\beta$ 
and $\Gamma$ are such that, on some filtered probability space  
$(\Omega,\mathcal{F},\mathbb{P},(\mathcal{F}_t,t\geq 0))$,
there exists a weak solution to \eqref{EDS-Xk-U} which does not explode in finite time.
In addition,
\item{(i)} The functions $A_i$, $i=1,\ldots,k$,  satisfy, for all $u$ in $\er^d$, 
$$\forall 
x_i\in\er\setminus(0,1),~~A_i(x_1,\ldots,x_{i-1},x_i,x_{i+1},\ldots,x_k,u)=0. $$

\item{(ii)} The functions $b_i$, $i=1,\ldots,k$,  satisfy, for all $u$  in $\er^d$ and  $x_1,\ldots,x_{i-1},x_{i+1},\ldots,x_k$
in $\er^{k-1}$, 
\begin{equation*}
\begin{cases}
b_i(x ,u)\geq0 \mbox{ on }\{x_i\leq0\}, \\ 
b_i(x,u)\leq 0 \mbox{ on }\{x_i \geq 1\}.
\end{cases}
\end{equation*}
\end{hypothesis}

The following argument implies that we may limit ourselves to the
case $k=1$. Set $U^\sharp:=(X^{(2)},\ldots,X^{(k)},U)$.
Then, for obviously defined new coefficients $\beta_1^\sharp$,
$A^\sharp$, etc., and a $\er^{r+k-1}$-valued Brownian motion
$W^{\ast\sharp}$ one has
\begin{equation*} 
\begin{cases}
\begin{split}
dX^{(1)}_t &= b^\sharp_1(X^{(1)}_t,U^\sharp_t)dt
+A^\sharp_1(X^{(1)}_t,U^\sharp_t)dW^{(1)}_t, \\
dU^\sharp_t &= \beta^\sharp_1(X^{(1)}_t,U^\sharp_t)dt
+\Gamma^\sharp_1(X^{(1)}_t,U^\sharp_t)dW^{\ast\sharp}_t.
\end{split}
\end{cases}
\end{equation*}
If we can prove that $X^{(1)}$ takes values in $[0,1]$, then
the same arguments would show that all the other components enjoy
the same property. We therefore consider the 
system~\eqref{EDS-Xk-U} with $k=1$.

\begin{proposition} \label{prop:main}
Suppose that $0\leq X^{(1)}_0 \leq 1$~a.s.
Under Hypothesis~\ref{hypo:H} it holds 
\[\mathbb{P}\left( \forall t \geq 0,~  0 \leq X_t^{(1)} \leq 1 
\right) = 1.\]
\end{proposition}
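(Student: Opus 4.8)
The plan is to test the scalar process $X^{(1)}$ against a nonnegative function that vanishes exactly on $[0,1]$ and to read off the sign of every term produced by Itô's formula from Hypothesis~\ref{hypo:H}. Fix $\phi:\er\to[0,\infty)$ convex, of class $C^1$ with Lipschitz derivative, with $\phi\equiv 0$ on $[0,1]$ and $\phi>0$ on $\er\setminus[0,1]$; the explicit choice $\phi(x)=(\max(-x,0))^2+(\max(x-1,0))^2$ works, with $\phi'(x)=2\min(x,0)+2\max(x-1,0)$ and (Lebesgue-a.e.) $\phi''(x)=2\ind_{\{x<0\}}+2\ind_{\{x>1\}}$. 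Two structural facts make the argument run: $\phi'$ and $\phi''$ vanish on the open interval $(0,1)$, whereas $A_1(\cdot,u)$ vanishes on its complement by Hypothesis~\ref{hypo:H}(i); and $\phi'\le 0$ on $(-\infty,0)$, $\phi'\ge 0$ on $(1,\infty)$, matching the sign conditions on $b_1$ in Hypothesis~\ref{hypo:H}(ii).

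I would then apply Itô's formula --- in the version valid for $C^1$ functions with absolutely continuous first derivative, so that $\phi''$ enters only through its a.e. representative --- to $\phi(X^{(1)}_t)$ along the given non-exploding weak solution of the scalar system~\eqref{EDS-Xk-U} with $k=1$. This yields, almost surely and for all $t\ge 0$,
\begin{equation*}
\phi(X^{(1)}_t)=\phi(X^{(1)}_0)+\int_0^t\phi'(X^{(1)}_s)b_1(X^{(1)}_s,U_s)\,ds+\int_0^t\phi'(X^{(1)}_s)A_1(X^{(1)}_s,U_s)\,dW^{(1)}_s+\tfrac12\int_0^t\phi''(X^{(1)}_s)A_1^2(X^{(1)}_s,U_s)\,ds.
\end{equation*}
Now I would dispatch the right-hand side term by term. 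First, $\phi(X^{(1)}_0)=0$ since $X^{(1)}_0\in[0,1]$ a.s. Second, the map $(x,u)\mapsto\phi'(x)A_1(x,u)$ is identically zero: it vanishes where $x\in(0,1)$ (there $\phi'=0$) and where $x\notin(0,1)$ (there $A_1=0$), the endpoints $x=0,1$ being covered by $\phi'(0)=\phi'(1)=0$; hence the stochastic integral is the zero process and no localization is needed. Third, and for the same reason, $\phi''(x)A_1^2(x,u)$ is zero for every relevant $s$ (the integrand is $0$ on $\{X^{(1)}_s\le 0\}\cup\{X^{(1)}_s\ge 1\}$ because $A_1=0$, and on $\{0<X^{(1)}_s<1\}$ because $\phi''=0$), so the Itô correction term vanishes. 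Finally $\phi'(x)b_1(x,u)\le 0$ for every $(x,u)$: trivially on $[0,1]$ where $\phi'=0$, while on $\{x<0\}$ one has $\phi'\le 0$ and $b_1\ge 0$ and on $\{x>1\}$ one has $\phi'\ge 0$ and $b_1\le 0$; since $b_1$ is locally Lipschitz hence locally bounded and $(X^{(1)},U)$ is continuous and non-exploding, the drift integral is well defined and $\le 0$.

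Putting these together gives $\phi(X^{(1)}_t)\le 0$ for all $t\ge 0$ a.s.; as $\phi\ge 0$, this forces $\phi(X^{(1)}_t)=0$, i.e.\ $X^{(1)}_t\in[0,1]$, and by continuity of $t\mapsto X^{(1)}_t$ this holds for all $t\ge 0$ simultaneously on a set of full probability. That is precisely Proposition~\ref{prop:main} for $k=1$, and the reduction to the scalar case carried out just before the statement then yields the general case. I expect the only point genuinely requiring care to be the validity of Itô's formula for the non-$C^2$ function $\phi$: one either invokes the classical extension of Itô's formula to $C^1$ functions whose derivative is absolutely continuous (taking $\phi''$ to be any a.e.\ version), or, equivalently, smooths $\phi$ into a genuinely $C^2$ convex function still vanishing precisely on $[0,1]$, for which every sign computation above goes through verbatim. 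A minor secondary check is the finiteness of the Lebesgue integrals on $[0,t]$, which follows from local boundedness of the coefficients together with non-explosion of the solution, the stochastic term needing no attention since it is identically zero.
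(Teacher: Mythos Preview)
Your argument is correct and rests on the same idea as the paper's proof: apply It\^o's formula to a nonnegative test function vanishing on $[0,1]$ and use Hypothesis~\ref{hypo:H}(i)--(ii) to control the signs of all resulting terms. The execution, however, is more direct. The paper treats only the lower barrier (arguing the upper one by symmetry), works with a family of genuine $C^2$ functions $\Psi_\epsilon$ approximating $\ind_{(-\infty,0)}$, localizes with exit times $\tau_n$, takes expectations so that the stochastic integral drops out as a martingale, and then passes two limits (Fatou in $n$, dominated convergence in $\epsilon$) before invoking indistinguishability of continuous modifications. You instead use a single test function handling both barriers at once and exploit the stronger observation that the integrand $\phi'(x)A_1(x,u)$ is \emph{identically} zero, so the stochastic integral is the zero process; this removes any need for localization, expectations, or limiting procedures and delivers the pathwise inequality $\phi(X^{(1)}_t)\le 0$ directly. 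The price is the appeal to the extended It\^o formula for $C^1$ functions with absolutely continuous derivative (equivalently It\^o--Tanaka, since $\phi$ is convex with $\phi''\in L^\infty$), which you rightly flag; your suggested fix of replacing $\phi$ by a $C^2$ function with the same zero set --- e.g.\ $\phi(x)=(x^-)^3+((x-1)^+)^3$, for which $\phi''$ is continuous and still supported outside $(0,1)$ --- makes every step go through with the classical It\^o formula and keeps all sign computations intact.
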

\begin{proof}
We limit ourselves to prove that $0\leq X_t^{(1)}$ for all $t\geq 0$ a.s. We can use very similar arguments to get the other inequality.

Let $\Psi_\epsilon$ be a positive
decreasing function of class $\mathcal{C}^2(\er)$
with $\Psi_\epsilon(x)=1$ on $(-\infty,-\epsilon]$ and $\Psi_\epsilon(x)=0$ on $[0,+\infty)$.  Let $\tau_n$ be the first time the process $(X^{(1)}_t)$ exits from the interval $(-n,n)$. As
$\Psi_\epsilon(x)=0$ on $\er_+$, $\Psi_\epsilon(X^{(1)}_0)=0$ and  It\^o's formula leads to
\begin{align*}
\Psi_\epsilon(X^{(1)}_{t\wedge\tau_n}) 
&=  \int_0^{t\wedge\tau_n} \indi{\{X^{(1)}_s\leq 0\}}
~b^\sharp_1(X^{(1)}_s,U^\sharp_s)
~\Psi_\epsilon'(X^{(1)}_s) ds \\
& \qquad +\dfrac{1}{2} 
\int_0^{t\wedge\tau_n} \indi{\{X^{(1)}_s \leq 0\}} 
(A^\sharp_1(X^{(1)}_s,U^\sharp_t))^2~\Psi_\epsilon''(X^{(1)}_s) ds \\
& \qquad + \int_0^{t\wedge\tau_n} \indi{\{X^{(1)}_s\leq 0\}}
~A^\sharp_1(X^{(1)}_s,U^\sharp_s)
~\Psi_\epsilon'(X^{(1)}_s) dW^{\ast\sharp}_s \quad =:  I_1 + I_2 + I_3.
\end{align*}
As $\Psi_\epsilon$ is decreasing, Hypothesis~\ref{hypo:H}-(ii)
and the definition of $b^\sharp_1$ imply that $I_1\leq0$.  As $\indi{\{x\leq 0\}}A^\sharp_1(x,u)=0$ for all $(x,u)$, one has
$I_2=0$. Finally, $I_3$ is a martingale. Therefore
$$ \forall t>0,~\EE\Psi_\epsilon(X^{(1)}_{t\wedge\tau_n})=0. $$
Fatou's lemma implies
$$ \forall t>0,~\EE\Psi_\epsilon(X^{(1)}_t)=0. $$
Now consider a family of functions $\Psi_\epsilon$ which 
pointwise converge to $\indi{(-\infty,0)}$ and satisfy $\sup_\epsilon|\Psi_\epsilon|_\infty=1$. 
Lebesgue's Dominated Convergence Theorem implies
$$ \forall t>0,~\EE[\indi{(-\infty,0)}(X^{(1)}_t)]=0.$$
In other words, the process $((X_t^{(1)})^{-},t\geq 0)$ is a modification of the null process. As they both are continuous processes, they are
indistinguishable (see, e.g., Karatzas and Shreve~\cite[Chap.1,Pb.1.5]{K-S}).
\end{proof}

\section{The models are well-posed diffusions in rectangular cylinders and propagate chaos}
\label{sec:systemes-bien-poses}

In this section, we check that the particle systems and the mean-field
limit systems are well-posed, and that the components of the
processes $(y^{i}_t),\ (x^{i}_t)$,
$(y^\alpha_t), \ (x^\alpha_t)$ take values in~$[0,1]$. Then
we prove that the particle systems propagate chaos towards the law of the limit systems~\eqref{eq:N-neuron-simple-conductance-limit}
and~\eqref{eq:N-neuron-sign-preserving-limit}.  

Our situation differs from the above mentioned Scheutzow's counterexamples~\cite{scheutzow}
in the fact that the interaction kernel is globally Lipschitz and the functions $F_\alpha$ are one-sided Lipschitz
(they are not only locally Lipschitz).
These features of the neuronal models under consideration protect one from non-uniqueness of solutions.

\subsection*{Well-posedness of the $N$-neuron models}
\begin{proposition} \label{prop:particle-systems-well-defined}
Under Hypothesis~\ref{hypo:coeff}
the systems~(\ref{eq:N-neuron-simple-conductance})
and~\eqref{eq:N-neuron-sign-preserving} have unique pathwise
solutions on all time interval $0\leq t\leq T$. In addition,
the components of the processes $(y^{i}_t)$, $(n^{i}_t)$, $(m^{i}_t)$, $(h^{i}_t)$ 
take values in~$[0,1]$. 
\end{proposition}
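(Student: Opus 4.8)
\emph{Plan of proof.} The idea is to recast both systems in the framework of Section~\ref{sec:systemes-contraints} and to combine Proposition~\ref{prop:main} with the classical energy estimate based on the one-sided Lipschitz condition~\eqref{cond:one-sided-lipschitz}. First I dispose of the maximal conductances. In~\eqref{eq:N-neuron-sign-preserving} the equations for the $J^{i\gamma}_t$ are decoupled Cox-Ingersoll-Ross (Feller) diffusions; weak existence together with pathwise uniqueness (the square root is $\tfrac12$-H\"older, the drift affine) gives a unique strong solution, which is nonnegative and has finite moments of every order on $[0,T]$, so I solve these equations once and for all and treat their paths as given. In~\eqref{eq:N-neuron-simple-conductance} the maximal conductances are white noise and already appear in integrated form, so nothing is needed. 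In both cases one is reduced to the coupled system for $X:=\big((V^i_t,y^i_t,\bx^i_t)\big)_{1\le i\le N}$, whose coefficients, apart from the $J$-paths, depend only on $(V,y,\bx)$. This system has the structure of Section~\ref{sec:systemes-contraints}: because $\chi$ has compact support in $(0,1)$, the diffusion coefficients of $(y^i_t)$ and of $(n^i_t,m^i_t,h^i_t)$ vanish off $(0,1)$ and are locally Lipschitz --- on any compact $v$-set intersected with $\mathrm{supp}\,\chi$ the expressions under the square roots are bounded below by a positive constant ($S_\alpha>0$, $\rho_x,\zeta_x>0$), so the square roots are smooth there; the corresponding drifts point strictly inward at the endpoints (at $y=0$ the $y$-drift is $a^\alpha_rS_\alpha(V)>0$, at $y=1$ it is $-a^\alpha_d<0$, and likewise for $n,m,h$); and the initial data $y^i_0,n^i_0,m^i_0,h^i_0$ lie in $[0,1]$ by Hypothesis~\ref{hypo:coeff}-(iv). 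All other coefficients are locally Lipschitz, the $V^i$-drift is one-sided Lipschitz in $V^i$ by~\eqref{cond:one-sided-lipschitz} and has at most linear growth in $V^i$ once the $y^j_t$'s lie in $[0,1]$ (then $\tfrac1{N_\gamma}\sum_j\indi{\{p(j)=\gamma\}}y^j_t\le1$), and $w^i$, when present, solves a linear ODE driven by $V^i$.

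The well-posedness itself I would obtain in three steps. (a) \emph{Weak existence.} Mollify each $F_\alpha$ in the variable $v$: convolution in $v$ preserves the one-sided Lipschitz constant $L$ of~\eqref{cond:one-sided-lipschitz}, the Lipschitz dependence on $\bx$ and the growth, and makes the drift locally Lipschitz, so the mollified system has a strong solution up to a possible explosion time. Two uniform a priori estimates close the argument: first, running the computation in the proof of Proposition~\ref{prop:main} up to the exit time $\tau_n$ of the $V^i$'s from $(-n,n)^N$ confines each of $y^i,n^i,m^i,h^i$ to $[0,1]$ on $[0,t\wedge\tau_n]$; second, with $\tfrac1{N_\gamma}\sum_j\indi{\{p(j)=\gamma\}}y^j_s\le1$ and the dissipativity carried by~\eqref{cond:one-sided-lipschitz} (the term $-M(v,v')(v-v')^2\le0$, superlinear in the Fitzhugh-Nagumo case), It\^o's formula for $(1+|V^i_t|^2)^p$ and Gronwall's lemma give $\sup_{t\le T}\EE(1+|V^i_{t\wedge\tau_n}|^2)^p\le C_{p,T}$ uniformly in $n$ and in the mollification parameter (for~\eqref{eq:N-neuron-sign-preserving} one also absorbs the cross term $|V^i_t||J^{i\gamma}_t|$ by Young's inequality and the $J$-moments, and in the Fitzhugh-Nagumo case $w^i_t$ is controlled through its linear ODE). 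Hence no explosion, uniform moments, tightness, and a subsequential limit which is a weak solution of the reduced system, hence of~\eqref{eq:N-neuron-simple-conductance} resp.~\eqref{eq:N-neuron-sign-preserving}. (b) \emph{Pathwise uniqueness.} Given two solutions on $[0,T]$, the argument of the proof of Proposition~\ref{prop:main}, localized at the exit times of their $V$-components, places all their channel variables in $[0,1]$; then, writing $F_\alpha(t,V,\bx)-F_\alpha(t,V',\bx')=\big(F_\alpha(t,V,\bx)-F_\alpha(t,V',\bx)\big)+\big(F_\alpha(t,V',\bx)-F_\alpha(t,V',\bx')\big)$ and bounding the first parenthesis by $L(V-V')^2$, the second by $L\|\bx-\bx'\|$ via~\eqref{cond:one-sided-lipschitz}, while the interaction, diffusion and $w^i$-ODE terms are Lipschitz (with at most linear growth in $V$, harmless after the localization), the standard $\EE\sup_{s\le t}$ estimate and Gronwall's lemma force the two solutions to coincide. (c) By the Yamada-Watanabe theorem, weak existence and pathwise uniqueness yield a unique strong solution of each system on $[0,T]$. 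Finally, letting $n\to\infty$ in the bounds of step (a) --- equivalently, invoking once more the argument of the proof of Proposition~\ref{prop:main} --- shows $y^i_t,n^i_t,m^i_t,h^i_t\in[0,1]$ for all $t\ge0$ almost surely, which is the last assertion.

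The main obstacle is the clash between the square-root diffusion coefficients --- in the $(y^i_t)$, in the $(n^i_t,m^i_t,h^i_t)$ and, for~\eqref{eq:N-neuron-sign-preserving}, in the $(J^{i\gamma}_t)$ --- which are not Lipschitz, and the drift $F_\alpha$, which is only one-sided Lipschitz (indeed superlinear); moreover the interaction term $(V^i_t-\overline V^{\alpha\gamma})\tfrac1{N_\gamma}\sum_j\indi{\{p(j)=\gamma\}}y^j_t$ has only linear growth in $V^i_t$, and only once the $y^j_t$'s have been confined to $[0,1]$. The resolution rests on three observations: the $(J^{i\gamma}_t)$ equations decouple and can be solved separately; the compact support of $\chi$ in the open interval $(0,1)$ simultaneously removes the square-root singularity from the region that matters and supplies parts (i) and (ii) of Hypothesis~\ref{hypo:H}; and the argument of Proposition~\ref{prop:main} must be interleaved with the one-sided Lipschitz energy estimate inside a single localization at the exit times of the $V$-components, each of the two ingredients requiring the other's conclusion in order to close, so that ordering them correctly is the delicate point. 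The rest --- mollification of $F_\alpha$, passage to a weak limit, Yamada-Watanabe --- is routine.
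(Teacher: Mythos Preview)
Your argument is correct, but it is considerably more elaborate than the paper's. The paper observes that \emph{all} the coefficients of~\eqref{eq:N-neuron-simple-conductance} and~\eqref{eq:N-neuron-sign-preserving} are locally Lipschitz: for the specific FitzHugh--Nagumo and Hodgkin--Huxley models under consideration, $F_\alpha$ is polynomial in $(v,\bx)$, hence locally Lipschitz, and the square-root diffusion coefficients are locally Lipschitz because on $\mathrm{supp}\,\chi\subset(0,1)$ the radicand is bounded away from zero while off $\mathrm{supp}\,\chi$ the whole coefficient vanishes. This single observation gives local strong existence and pathwise uniqueness up to the explosion time directly via the standard theorem for locally Lipschitz SDEs (e.g.\ Protter~\cite[Chap.~V, Thm.~38]{protter}), so your mollification, tightness and Yamada--Watanabe machinery in steps~(a)--(c) is unnecessary here. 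The paper then proceeds as you do: a second-moment energy estimate using the one-sided Lipschitz condition~\eqref{cond:one-sided-lipschitz} rules out explosion, and Proposition~\ref{prop:main} yields the $[0,1]$ confinement. Your explicit interleaving of the confinement argument with the moment bounds inside a common localization is a point the paper leaves implicit (the argument of Proposition~\ref{prop:main} is in fact run up to $\tau_n$, hence applies on $[0,\tau_\infty)$, which is what the moment estimate needs), so on that score you are being more careful. What your route buys is robustness: it would still go through if one read Hypothesis~\ref{hypo:coeff}-(iii) literally and allowed $F_\alpha$ to be merely continuous and one-sided Lipschitz in $v$, without assuming local Lipschitz continuity; the paper's shortcut relies on the concrete polynomial structure of the two neuron models.
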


\begin{proof}
Observe that the coefficients of~(\ref{eq:N-neuron-simple-conductance})
and~(\ref{eq:N-neuron-sign-preserving}) are locally Lipschitz. This is 
obvious for the drift coefficients. In view of the assumption on the function 
$\chi$, the diffusion coefficients obviously are locally Lipschitz 
at all point $(v,x)$ (respectively, $(v,y)$) with $x$ or $y$ in 
$\er\setminus[0,1]$; this also holds true at all the other points since,
for all $\lambda_1>0$ and $\lambda_2>0$,
the value of $z$ such that $\lambda_1(1-z)+\lambda_2 z=0$ never belongs to $[0,1]$, so that
the arguments of the square roots in the diffusion coefficients are strictly positive when
$x$ (respectively, $y$) belongs to $[0,1]$.

It results from the preceding observation that  solutions to
~(\ref{eq:N-neuron-simple-conductance})
and~(\ref{eq:N-neuron-sign-preserving})
exist and are pathwise unique up to their respective explosion times:
see, e.g.,  Protter~\cite[Chap.V, Sec. 7, Thm.38]{protter}. Set
$$ \xi^{i}_t := (V^{i}_t,y^i_t,\bx^{i}_t)~~\text{or}~~
(V^{i}_t,J^{i\gamma}_t,y^i_t, 
\bx^{i}_t),\,\gamma\in\PP . $$

Using the one-sided Lipschitz  condition~(\ref{cond:one-sided-lipschitz}) and 
It\^o's formula, it is an easy exercise to get that 
$\EE|\xi^{i}_T|^2$ is finite for all $T>0$, from which it
readily follows that
$\EE\sup_{0\leq t\leq T}|\xi^{i}_t|^2$ is finite for all $T>0$.
Therefore the explosion times
of~(\ref{eq:N-neuron-simple-conductance})
and~(\ref{eq:N-neuron-sign-preserving}) are infinite a.s.

Let us now check that the coordinates of $(y^{i}_t,n^{i}_t,m^{i}_t, h^{i}_t)$ 
take values in $[0,1]$.  Their dynamics are of the type
\begin{equation}\label{eq:ionchanelsbis}
dx_t = \left( \rho_x(V_t)(1-x_t)  - \zeta_x(V_t )x_t\right) dt 
+ \sqrt{|\rho_x(V_t)(1-x_t) + \zeta_x(V_t )x_t|} \;\chi(x_t) dW^x_t 
\end{equation}
for $x= n, m, h$, and 
\begin{equation}\label{eq:ionchanelsy}
dy^{\alpha}_t =  \left(a^\alpha_rS_\alpha(V_t) 
(1-y^{\alpha}_t)  - a^\alpha_dy^{\alpha}_t \right) dt 
+ \sqrt{ \left| a^\alpha_r S_\alpha(V_t) (1-y^{\alpha}_t)  
	+ a^\alpha_dy^{\alpha}_t \right| } \; \chi(y^{\alpha}_t) 
dW^{\alpha,y}_t, 
\end{equation}
where $V_t$ is some real-valued continuous process. The hypothesis~\ref{hypo:H}-(ii) is  satisfied by the drift
coefficients of~(\ref{eq:ionchanelsbis}) and~(\ref{eq:ionchanelsy}): 
\begin{align*}
&(x,v) \mapsto b_x(x,v) := \rho_x(v)(1-x) - \zeta_x(v)x\\
\mbox{and}\qquad & (y,v) \mapsto b_y(y,v) := a_r ^{\alpha}S_\alpha(v) (1-y)  - a_d^{\alpha} y.
\end{align*}
The desired result follows, by applying Proposition~\ref{prop:main}.
\end{proof}

\begin{remark}
The preceding discussion shows that one can get rid of the absolute 
values in the diffusion coefficients of all the models in 
Section~\ref{sec:the-models}.
\end{remark}

\subsection*{Well-posedness of the mean-field limit models}~

For the next proposition we slightly reinforce the hypotheses on the functions $\rho_x$
and $\zeta_x$. The boundedness from below by strictly positive constant
is justified from a biological point of view (see the discussion in \cite[Sec.2.1,p.5]{baladron-al}).

\begin{proposition} \label{prop:EDS-non-lineaire}
Under Hypothesis~\ref{hypo:coeff} and the coercivity condition
\begin{align}\label{hypo:coercivity}
\exists \nu>0,~\forall v\in\er,~\rho_x(v)\wedge\zeta_x(v)\geq\nu>0,
\end{align}
the systems~\eqref{eq:N-neuron-simple-conductance-limit}
and~\eqref{eq:N-neuron-sign-preserving-limit}, complemented with \eqref{eq:N-neuron-limit-w} or \eqref{eq:N-neuron-limit-x},
have unique pathwise solutions on all time interval~$[0,T]$.
In addition, all the components of the process 
$(y^{\alpha}_t, n^\alpha_t,m^\alpha_t,h^\alpha_t)$ take values in~$[0,1]$.
\end{proposition}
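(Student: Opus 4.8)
The plan is to recast both mean--field systems as a fixed--point problem for the deterministic maps $t\mapsto\EE[y^\gamma_t]$, $\gamma\in\PP$, since these are the only mean--field quantities entering the equations: in \eqref{eq:N-neuron-simple-conductance-limit} and \eqref{eq:N-neuron-sign-preserving-limit} the laws of the processes interact solely through the functions $\EE[y^\gamma_\cdot]$ in the coefficients of $V^\alpha$, and by \eqref{expression-overline-y-gamma} these are explicit functionals of $\theta\mapsto\EE[S_\gamma(V^\gamma_\theta)]$. I would therefore work on the complete metric space $E_T:=C([0,T];[0,1]^{\PP})$ of candidate tuples $\mathbf e=(e_\gamma)_{\gamma\in\PP}$ for $(\EE[y^\gamma_\cdot])_\gamma$, equipped with the sup norm, and construct a map $\Phi:E_T\to E_T$ whose fixed points are exactly the laws of the solutions. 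In the sign--preserving case one first disposes of the conductances: the equations for $(J^{\alpha\gamma}_t)$ in \eqref{eq:N-neuron-sign-preserving-limit} are autonomous square--root (CIR) diffusions, hence admit unique nonnegative strong solutions with finite moments of every order (pathwise uniqueness by Yamada--Watanabe for the $1/2$--Hölder coefficient, nonnegativity by comparison, using $J^{\alpha\gamma}_0>0$), and one fixes these processes once and for all.

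Next I would analyse the \emph{decoupled} system obtained by freezing $\mathbf e\in E_T$ in place of $(\EE[y^\gamma_\cdot])_\gamma$ in the $V^\alpha$--equations. Exactly as in the proof of Proposition~\ref{prop:particle-systems-well-defined}, the coefficients are locally Lipschitz once the square--root diffusion coefficients are extended via the compact--support property of $\chi$ in Hypothesis~\ref{hypo:coeff}-(i); moreover the coercivity condition \eqref{hypo:coercivity} guarantees that on $\{x\in[0,1]\}$ one has $\rho_x(v)(1-x)+\zeta_x(v)x\ge\nu>0$, so that the diffusion coefficient of the $(n,m,h)$--equations is in fact Lipschitz there (this is used in the last step). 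Hence there is a unique pathwise solution up to explosion. The components $(n^\alpha,m^\alpha,h^\alpha)$ cannot explode (their drifts are affine in $x$ uniformly in $v$, their diffusion coefficients vanish off a compact $x$--set) and by Proposition~\ref{prop:main} they stay in $[0,1]$; then an It\^o computation for $|V^\alpha_t|^2$ using the one--sided Lipschitz bound \eqref{cond:one-sided-lipschitz}, the boundedness of $\bx^\alpha$, the bounds $0\le e_\gamma\le1$, the positivity of $\overline J^{\alpha\gamma},\sigma^J_{\alpha\gamma}$ and of $J^{\alpha\gamma}_t$ (so that the quadratic self--interaction terms enter with the favourable sign and are bounded above), and the finite moments of $J^{\alpha\gamma}$, yields $\EE\sup_{[0,T]}|V^\alpha_t|^2<\infty$, hence no explosion; finally $y^\alpha$ solves a one--dimensional SDE with now--known coefficients, unique by Yamada--Watanabe and valued in $[0,1]$ by Proposition~\ref{prop:main}. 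I would set $\Phi(\mathbf e)_\gamma(t):=\EE[y^\gamma_t]\in[0,1]$, which by \eqref{expression-overline-y-gamma} and continuity of $t\mapsto\EE[S_\gamma(V^\gamma_t)]$ is continuous; thus $\Phi$ maps $E_T$ into itself.

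The last step is to show that a power of $\Phi$ is a contraction. Given $\mathbf e^1,\mathbf e^2\in E_T$ with associated solutions, I would estimate $\EE|V^{\alpha,1}_t-V^{\alpha,2}_t|^2$ by It\^o's formula: the $F_\alpha$--contribution in $v$ is controlled by \eqref{cond:one-sided-lipschitz} (dropping the favourable $-M(\cdot,\cdot)$ term), its $\bx$--dependence by the Lipschitz bound in \eqref{cond:one-sided-lipschitz} together with a parallel Gronwall estimate for $\EE\|\bx^{\alpha,1}_t-\bx^{\alpha,2}_t\|^2$ --- this is the delicate point, and it is precisely where the Lipschitz property of the $(n,m,h)$ diffusion coefficient on $[0,1]$ granted by \eqref{hypo:coercivity} is needed, because $\bx^\alpha$ feeds back into $F_\alpha$; the self--interaction terms $(V^\alpha-\overline V^{\alpha\gamma})(\cdots)$ are handled by their bounded coefficients (and, in the simple case, the multiplicative $dB^{\alpha\gamma}$ term contributes a bracket term again with bounded coefficient), and the discrepancy terms by $|e^1_\gamma(s)-e^2_\gamma(s)|$ times $\EE|V^{\alpha,2}_s-\overline V^{\alpha\gamma}|^2$, the latter bounded uniformly over $E_T$. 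Gronwall then gives $\sup_\alpha\EE|V^{\alpha,1}_t-V^{\alpha,2}_t|^2\le C_T\int_0^t\|\mathbf e^1_s-\mathbf e^2_s\|^2\,ds$, and since $S_\gamma$ is bounded Lipschitz, \eqref{expression-overline-y-gamma} yields $\|\Phi(\mathbf e^1)(t)-\Phi(\mathbf e^2)(t)\|\le C_T\int_0^t\|\mathbf e^1-\mathbf e^2\|_{\infty,[0,s]}\,ds$. Iterating (the usual Picard argument), $\Phi^{\circ m}$ is a strict contraction on $E_T$ for $m$ large, whose unique fixed point $\mathbf e^\ast$, re--inserted, produces the unique pathwise solution: indeed any solution of \eqref{eq:N-neuron-simple-conductance-limit} or \eqref{eq:N-neuron-sign-preserving-limit} has $(\EE[y^\gamma_\cdot])_\gamma$ a fixed point of $\Phi$, hence equal to $\mathbf e^\ast$, so its components solve the decoupled system with $\mathbf e=\mathbf e^\ast$ and thus coincide; the $[0,1]$--valuedness of $(y^\alpha,n^\alpha,m^\alpha,h^\alpha)$ has already been established along the way. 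The main obstacle, as indicated, is closing the $\bx^\alpha$ Gronwall estimate feeding into the $V^\alpha$ difference, which is exactly what forces the reinforced coercivity hypothesis \eqref{hypo:coercivity}; the remaining bookkeeping (moment bounds with the random coefficient $J^{\alpha\gamma}_t$, and the multiplicative noise in the simple case) is routine.
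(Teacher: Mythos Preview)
Your argument is correct and proceeds along lines parallel to the paper's, but the fixed--point is set up on a different (and smaller) space. The paper runs Sznitman's contraction on the space $\mathcal{M}_T$ of probability laws on path space equipped with the Wasserstein(2) metric: given $\pi\in\mathcal{M}_T$ one forms $\overline{y}^{(\pi),\gamma}_\cdot$ via the explicit formula~\eqref{expression-overline-y-gamma}, solves the decoupled SDE, and lets $\Phi(\pi)$ be the resulting law; the contraction estimate $\Ww_T^2(\Phi(\pi_1),\Phi(\pi_2))\le K_T\int_0^T\Ww_s^2(\pi_1,\pi_2)\,ds$ then uses the very same ingredients you isolate (one--sided Lipschitz bound, the coercivity~\eqref{hypo:coercivity} to make the channel diffusion coefficients Lipschitz on $[0,1]$, and the Lipschitz dependence of $\overline{y}^{(\pi),\gamma}$ on $\pi$). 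You instead exploit that the interaction enters only through the deterministic tuple $(\EE[y^\gamma_\cdot])_\gamma$, and run the contraction directly on $E_T=C([0,T];[0,1]^{\PP})$. This is a genuine simplification: it avoids the Wasserstein machinery altogether and makes transparent why Scheutzow--type non--uniqueness is excluded here. Conversely, the paper's formulation is closer to the standard propagation--of--chaos template and would transfer more readily to interaction kernels depending on the full law rather than on a single scalar moment. The core analytic estimates --- in particular the role of~\eqref{hypo:coercivity} in bounding the quadratic variation term~\eqref{bound:quadraticDiff}, and the use of the good sign of $-J^{\alpha\gamma}_t(V^\alpha_t-\overline V^{\alpha\gamma})^2\,\EE[y^\gamma_t]$ together with finite CIR moments in the sign--preserving case --- are identical in both approaches.
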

\begin{proof}
Existence and pathwise uniqueness are obtained by slightly extending
the fixed point method developed by Sznitman~\cite{sznitman} for particle 
systems with bounded Lipschitz coefficients. We essentially combine
arguments already available in the literature 
(e.g. see~\cite{L-S} and references therein)
and therefore only
emphasize the additional minor arguments
required by the above neuron models. As exactly the same arguments can be used to
treat Equations~(\ref{eq:N-neuron-simple-conductance-limit}) and
~(\ref{eq:N-neuron-sign-preserving-limit}), we limit ourselves to consider the second one. 

We start with the following observation. 
Given the Brownian motions $(B^{\alpha\gamma}, \alpha,\gamma\in\PP )$ and the constant $\bar{J}^{\alpha\gamma}$, the  processes $(J^{\alpha\gamma}_t,\alpha,\gamma \in \PP )$  are unique pathwise solutions according to the Yamada and Watanabe Theorem (see e.g., Karatzas and Shreve~\cite[Chap.5, Thm.2.13]{K-S}). Let $\varphi(t)$ be an arbitrary
continuous function. Consider the system obtained by substituting 
$\varphi(t)$ for $\EE[y_t^{\gamma}]$ 
into~(\ref{eq:N-neuron-sign-preserving-limit}). Similar arguments as in
the proof of Proposition~\ref{prop:particle-systems-well-defined}
show that this new system has a unique pathwise solution. 

Now, we denote by $\ell=\bar{p}^2+6\bar{p}$  the dimension of the state space of
the process 
$$(V^\alpha_t,J^{\alpha\gamma}_t,y^{\alpha}_t, w^\alpha_t,(n^\alpha_t,m^\alpha_t,h^\alpha_t);
~\alpha,\gamma\in\PP ).$$
\begin{remark}\label{rmrk:FN+HH} 
Notice that we have lumped together the two models we are focusing on, i.e. Fitzhugh-Nagumo and Hodgkin-Huxley, since the mathematical tools for handling them are identical.
\end{remark}

\noindent
Let
$$(r_t) :=(v^\alpha_t,j^{\alpha\gamma}_t,\psi^\alpha_t,w^\alpha_t, z^\alpha_t;
~\alpha,\gamma\in\PP )$$
be the canonical process of $\mathcal{C}(0,T;\er^\ell)$.
Let $\mathcal{C}_T $ be the subspace of $\mathcal{C}(0,T;\er^\ell)$ 
consisting of the paths of the canonical process such that 
$(\psi^\alpha_t,z^\alpha_t)$ takes values in $[0,1]^4$ for all $t$, $\alpha\in\PP $.

Equip the space $\mathcal{M}_T$ of probability measures on 
$\mathcal{C}_T$ with the standard Wasserstein(2) metric:
$$ \Ww_T(\pi_1,\pi_2)
 := \left\{\min_{\mu\in\Lambda(\pi_1,\pi_2)} \int_{C_T\;} \sup_{0\leq s\leq T}
\left(
\left|r^1_s-r^2_s\right|^2 \right)
~\mu(dr^1,dr^2) \right\}^{1/2},$$ 
where $\Lambda(\pi_1,\pi_2)$ denotes the set of all coupling measures of
$\pi_1$ and $\pi_2$.

Given $\pi$ in $\mathcal{M}_T$, set
\begin{align*}
\overline{y}^{\mes,\gamma}_t & := \EE_\pi [y_0^{\gamma}] \exp\left\{ - a^\gamma_d t -  a_r^\gamma\int_0^t  
\EE_\pi[S_\gamma(v^\gamma_\theta)] d\theta \right\}\\
& \qquad + \int_0^t a_r^\gamma \EE_\pi[S_\gamma(v^\gamma_s)]\exp\left\{ - a^\gamma_d (t-s)  -  a_r^\gamma\int_s^t  
\EE_\pi[S_\gamma(v^\gamma_\theta)] d\theta \right\} ds,  \ \mbox{for all }\gamma\in\PP .
\end{align*}

Let us construct a contraction map $\Phi$ on $\mathcal{M}_T$ as follows.
For all $\pi$ in $\mathcal{M}_T$, $\Phi(\pi)$ is the probability law of the process 
$$(R^\mes_t) :=(V^{\mes,\alpha}_t,J^{\alpha\gamma}_t,y^{\mes,\alpha}_t,w^{\mes,\alpha}_t,
x^{\mes,\alpha}_t;~\alpha,\gamma\in\PP ;~x=n,m,h) $$
solution to
\begin{equation*} 
\begin{split}
dV^{\mes,\alpha}_t &= F_\alpha(t, V^{\mes,\alpha}_t , \bx^{\mes,\alpha}_t) dt   
- \sum_{ \gamma\in\PP } (V^{\mes,\alpha}_t 
- \overline{V}^{\alpha \gamma}) 
 J_t^{\alpha\gamma} \overline{y}^{\mes,\gamma}_t   dt   
+ \sigma_\alpha dW^{\alpha_t,V}, \\
d J^{\alpha\gamma}_t &= \theta_{\alpha\gamma}
\left( \bar{J}^{\alpha\gamma} 
- J^{\alpha\gamma}_t \right) dt + \sigma^{J}_{\alpha\gamma}  
\sqrt{J^{\alpha\gamma}_t} dB^{\alpha\gamma}_t,  \\
dy^{\mes,\alpha}_t &=  \left(a^\alpha_r S_\alpha(V^{\mes,\alpha}_t)(1-y^{\mes,\alpha}_t)  - a^\alpha_d y^{\mes,\alpha}_t \right) dt 
+ \sqrt{ \left| a^\alpha_r S_\alpha(V^{\mes,\alpha}_t)(1-y^{\mes,\alpha}_t)  
+ a^\alpha_d y^{\mes,\alpha}_t \right| } \;  \chi(y^{\mes,\alpha}_t)  dW^{\alpha,y}_t\\
dw^{\mes,\alpha}_t &= c_\alpha(V^{\mes,\alpha}_t + a_\alpha - b_\alpha 
w^{\mes,\alpha}_t) dt, \\
dx^{\mes,\alpha}_t  &= \left(  \rho_x(V^{\mes,\alpha}_t)(1-x^{\mes,\alpha}_t) 
- \zeta_x(V^{\mes,\alpha}_t )x^{\mes,\alpha}_t\right) dt \\
&\qquad + \sqrt{| \rho_x(V^{\mes,\alpha}_t)(1-x^{\mes,\alpha}_t) 
+ \zeta_x(V^{\mes,\alpha}_t )x^{\mes,\alpha}_t|} \;\chi(x^{\mes,\alpha}_t) 
dW^{\alpha,x}_t
\end{split}
\end{equation*}
with $x=n,m,h$, $\bx=(w)$ or $\bx=(x)$.  
Notice that the probability law of the process $(R^\mes_t)$ is supported in
$\mathcal{C}_T$.

Set $ \Delta R_t := R^{(\pi_1)}_t - R^{(\pi_2)}_t$, and
apply It{\^o}'s formula to $ (\Delta R_t)^2$.
In order to deduce that there exists a positive constant $K_T$ uniform w.r.t. $\pi_1$ and $\pi_2$ such that
\begin{equation} \label{ineq:delta-Z}
\forall 0\leq t\leq T,~~\EE|\Delta R_t|^2 \leq K_T\left(
\int_0^t \EE|\Delta R_s|^2 ds + \int_0^t \Ww_s(\pi_1,\pi_2) ds \right),
\end{equation}
it suffices to use classical arguments, plus the following ingredients:
\begin{itemize}
\item The one-sided Lipschitz  condition~(\ref{cond:one-sided-lipschitz});
\item The fact that
$\overline{y}^{\mes,\alpha}_t$ is uniformly bounded w.r.t. $\pi$
in $\mathcal{M}_T$ and $t$ in $[0,T]$;
\item The additional coercivity condition 
\eqref{hypo:coercivity} implies
$$\forall x\in[0,1],~\forall v\in\er, ~ \rho_x(v) (1-x) + \zeta_x(v) x 
\geq \nu >0. $$
As $(x^{(\pi_1),\alpha}_t)$ and $(x^{(\pi_2),\alpha}_t)$ take values in 
$[0,1]$, all the terms of the type
\begin{multline}\label{bound:quadraticDiff}
\int_0^t \left( 
\sqrt{| \rho_x(V^{(\pi_1),\alpha}_s)(1-x^{(\pi_1),\alpha}_s) 
+ \zeta_x(V^{(\pi_1),\alpha}_s)x^{(\pi_1),\alpha}_s|\;} \chi(x^{(\pi_1),\alpha}_s)
\right. \\
\left.
- \sqrt{| \rho_x(V^{(\pi_2),\alpha}_s)(1-x^{(\pi_2),\alpha}_s) 
+ \zeta_x(V^{(\pi_2),\alpha}_s)x^{(\pi_2),\alpha}_s|\;}\chi(x^{(\pi_2),\alpha}_s)
\;\right)^2  ds 
\end{multline}
are bounded from above by
$$ K_T\int_0^t  | x^{(\pi_1),\alpha}_s-x^{(\pi_2),\alpha}_s |^2 ds
+ K_T\int_0^t  | V^{(\pi_1),\alpha}_s-V^{(\pi_2),\alpha}_s |^2 ds,$$
(the same remarks apply to the diffusion coefficients of $(y^{(\pi_1),\alpha}_t)$ and $(y^{(\pi_2),\alpha}_t)$);
\item The existence of a positive $K_T$ uniform w.r.t. $\pi_1$ and $\pi_2$ such
that, for all $\alpha\in\PP $, 
$$\sup_{0\leq t\leq T}\left| \overline{y}^{(\pi_1),\alpha}_t
- \overline{y}^{(\pi_2),\alpha}_t \right|^2
\leq K_T \int_0^T \Ww^2_s(\pi_1,\pi_2)ds. $$
\end{itemize}
Classical arguments allow one to deduce from~(\ref{ineq:delta-Z}) that,
for some possibly new positive constant $K_T$,
$$ \Ww^2_T(\Phi(\pi_1),\Phi(\pi_2))
\leq K_T \int_0^T \Ww^2_s(\pi_1,\pi_2)ds. $$
One then obtains that $\Phi$ is a contraction map (see~Sznitman~\cite{sznitman}), 
from which the desired existence and pathwise uniqueness result follows 
for \eqref{eq:N-neuron-sign-preserving-limit}.

It remains to again use Proposition~\ref{prop:main} to get the last part in the statement.
\end{proof}

\subsection*{Convergence}

In this part, we analyze the convergence of the $N$-neurons system given in \eqref{eq:N-neuron-sign-preserving} to the mean-field limit described in \eqref{eq:N-neuron-sign-preserving-limit}. The convergence of the model \eqref{eq:N-neuron-simple-conductance} to the solution of \eqref{eq:N-neuron-simple-conductance-limit} results from a straightforward adaptation of the following  proposition and of its proof. 
Moreover, as in the proof of Proposition \ref{prop:EDS-non-lineaire} we use again Remark~\ref{rmrk:FN+HH} to shorten the presentation.

Let $(R_t) = (R^\alpha_t,\alpha\in\PP )  = (V^\alpha_t,(J^{\alpha\gamma}_t,\gamma \in \PP ), y^\alpha_t,  w^\alpha_t,n^\alpha_t, m^\alpha_t, h^\alpha_t;\alpha\in\PP )$ denote the solution of \eqref{eq:N-neuron-sign-preserving-limit}, with law $\mathbb{P}$ on $C_T$.  
Let $(R^{i,N}_t,i=1,\ldots,N)= (V^i_t,(J^{i\gamma}_t, \gamma \in \PP ),y^i_t,  w^i_t,n^i_t, m^i_t,h^i_t;i=1,\ldots,N)$, the solution of the N-neurons system \eqref{eq:N-neuron-sign-preserving}.  
Considering the family of Brownian motions in~ \eqref{eq:N-neuron-sign-preserving}, and the set of i.i.d  initial random variables $(V^i_0,(J^{i\gamma}_0, \gamma \in \PP ), y^i_0, w^i_0,n^i_0,m^i_0,h^i_0;i=1,\ldots,N)$, 
we introduce a coupling between the $(R^{i,N}_t,i=1,\ldots,N)$ and a set of $N$ processes $({\tR}^i_t,i=1,\ldots,N)$ such that for all $\alpha \in \PP $, all $N_\alpha$ indices $i$ such that $p(i)=\alpha$ are such that $({\tR}^i_t)$ are independent copies of $(R^\alpha_t)$. More precisely, for each $i=1,\ldots,N$, such that $p(i)=\alpha$,  $({\tR}^{i}_t)= (\widetilde{V}^i_t,(\widetilde{J}^{i\gamma}_t, \gamma \in \PP ),\widetilde{y}^i_t,  \widetilde{w}^i_t,\widetilde{n}^i_t,\widetilde{m}^i_t,\widetilde{h}^i_t)$ is the solution of 
\begin{align*}
\left\{
\begin{aligned}
& \mbox{ for }\tbx = (\widetilde{w}) \mbox{ or } (\widetilde{n},\widetilde{m},\widetilde{h}), \\
&d\widetilde{V}^{i}_t = F_\alpha(t, \widetilde{V}^{i}_t , \tbx^{i}_t) dt   
- \sum_{ \gamma\in\PP }  (\widetilde{V}^{i}_t - \overline{V}^{\alpha \gamma})
\widetilde{J}_t^{i\gamma} \EE[y_t^\gamma] dt  + \sigma_\alpha dW^{i,V}_t, \\
&(\widetilde{J}^{i \gamma}_t, t\geq 0) =  (J^{i \gamma}_t, t\geq 
0), \text{ for all }\gamma \in \PP, \\
& d\widetilde{y}^{i}_t =  \left(a^\alpha_rS_\alpha(\widetilde{V}^{i}_t) 
(1-\widetilde{y}^{i}_t)  - a^\alpha_d\widetilde{y}^{i}_t \right) dt 
+ \sqrt{ \left|a^\alpha_rS_\alpha(\widetilde{V}^{i}_t) (1-\widetilde{y}^{i}_t) 
+ a^\alpha_d\widetilde{y}^{i}_t \right| } \; \chi(\widetilde{y}^{i}_t) 
dW^{i,y}_t, 
\end{aligned}
\right.
\end{align*}
coupled with 
\begin{align*} 
d\widetilde{w}^{i}_t = c_\alpha(\widetilde{V}^{i}_t + a_\alpha - b_\alpha \widetilde{w}^{i}_t) dt
\end{align*}
or
\begin{align*}
d\widetilde{x}^{i}_t  &= \left( \rho_{x}(\widetilde{V}^{i}_t)
(1-\widetilde{x}^{i}_t) 
- \zeta_{x}(\widetilde{V}^{i}_t)\widetilde{x}^{i}_t\right) dt + \sqrt{| \rho_{x}(\widetilde{V}^{i}_t)(1-\widetilde{x}^{i}_t) 
+ \zeta_{x}(\widetilde{V}^{i}_t )
\widetilde{x}^{i}_t|} \;\chi(\widetilde{x}^{i}_t) dW^{i,x}_t
\end{align*}
for $\widetilde{x}=\widetilde{n},\widetilde{m},\widetilde{h}$,  and starting at $(V^i_0,(J^{i\gamma}_0, \gamma \in \PP ), y^i_0, w^i_0,n^i_0,m^i_0,h^i_0)$. 

Under the hypotheses of Proposition~\ref{prop:EDS-non-lineaire}
we have the following propagation of chaos result:

\begin{proposition}\label{prop:propagationChaos}
Assume that for all $\gamma\in \PP $, the proportion $N_\gamma/N$ of neurons in population $\gamma$ is a nonzero constant independent of $N$ and denoted:
\begin{align}\label{hypo:proportion}
c_\gamma = \dfrac{N_\gamma}{N}.
\end{align}
Then there exists a constant $C>0$ such that, for all $N = \sum_{\gamma\in\PP } N_\gamma$ satisfying the Assumption \eqref{hypo:proportion}, for all set of $\bar{p}$ indices $(i_\alpha, \alpha\in  \PP )$ among $\{1,\ldots,N\}$ with  $p(i_\alpha) = \alpha$,   
the vector process $(R^{i_\alpha,N} - {\tR}^{i_\alpha})$, with one component in each population, satisfies
\begin{equation} \label{ineq:vitesse-cv-propag-chaos}
\sqrt{N}\; \EE\left[ \sup_{0\leq t\leq T} \sum_{\alpha \in \PP } 
|R^{i_\alpha,N}_t-{\tR}^{i_\alpha}_t|^2
\right] \leq C.
\end{equation}
\end{proposition}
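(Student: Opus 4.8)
The plan is to estimate $\EE\sup_{0\le s\le t}\sum_\alpha |R^{i_\alpha,N}_s-\tR^{i_\alpha}_s|^2$ by a Gronwall argument, so the central step is to apply It\^o's formula to $|\Delta R^i_t|^2 := |R^{i,N}_t-\tR^i_t|^2$ for a fixed index $i$ with $p(i)=\alpha$, and to control each resulting term. Since the $\widetilde J^{i\gamma}$ are coupled to be equal to the $J^{i\gamma}$, the conductance components contribute nothing to $\Delta R^i$, which removes the one genuinely non-Lipschitz ($\sqrt{\cdot}$) drift/diffusion pair from the comparison; the $\sqrt{J}$ terms only enter through \emph{a priori} moment bounds $\EE\sup_{[0,T]}|J^{i\gamma}_t|^p<\infty$ (true by standard estimates, since these are CIR-type processes with the moment hypothesis on $J_0$). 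For the $V$-component I would use the one-sided Lipschitz condition \eqref{cond:one-sided-lipschitz}: the term $(F_\alpha(t,V^i_t,\bx^i_t)-F_\alpha(t,\widetilde V^i_t,\tbx^i_t))(V^i_t-\widetilde V^i_t)$ splits into a part bounded by $L|\Delta V^i_t|^2$ (the $-M$ term is discarded as it is $\le 0$) plus a cross term $\le L|\Delta\bx^i_t|\,|\Delta V^i_t|$ absorbed by Young's inequality. For the $y$, $n$, $m$, $h$ components I would invoke exactly the estimate already established in the proof of Proposition~\ref{prop:EDS-non-lineaire} — namely that, because these processes all live in $[0,1]$ and the coercivity condition \eqref{hypo:coercivity} keeps the arguments of the square roots bounded below by $\nu>0$, the quadratic-variation differences of the $\sqrt{\cdot}\,\chi(\cdot)$ diffusion coefficients are Lipschitz in $(V,x)$, cf.\ \eqref{bound:quadraticDiff}.

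The one term that is \emph{not} of Lipschitz-in-$\Delta R$ type is the interaction drift in $dV^i_t$, namely
\[
-\sum_{\gamma\in\PP}(V^i_t-\overline V^{\alpha\gamma})\frac{J^{i\gamma}_t}{N_\gamma}\sum_{j:p(j)=\gamma}y^j_t
\quad\text{versus}\quad
-\sum_{\gamma\in\PP}(\widetilde V^i_t-\overline V^{\alpha\gamma})J^{i\gamma}_t\,\EE[y^\gamma_t].
\]
I would write the difference as a telescoping sum: (a) replace $V^i_t$ by $\widetilde V^i_t$, producing a term $\le C\,|\Delta V^i_t|^2$ after using the moment bounds on $J^{i\gamma}$ and the boundedness of $\frac1{N_\gamma}\sum y^j\in[0,1]$; (b) with $\widetilde V^i_t$ fixed, compare $\frac1{N_\gamma}\sum_{j:p(j)=\gamma}y^j_t$ with $\frac1{N_\gamma}\sum_{j:p(j)=\gamma}\widetilde y^j_t$, bounded by $\frac1{N_\gamma}\sum_{j:p(j)=\gamma}|\Delta y^j_t|$; (c) compare $\frac1{N_\gamma}\sum_{j:p(j)=\gamma}\widetilde y^j_t$ with its mean $\EE[y^\gamma_t]=\EE[\widetilde y^j_t]$ — this is the true mean-field fluctuation term. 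Step (b) is what forces the estimate to be run \emph{simultaneously over all neurons}: after squaring, taking $\sup_t$, and $\EE$, the term $\big(\frac1{N_\gamma}\sum_{j:p(j)=\gamma}|\Delta y^j_s|\big)^2\le\frac1{N_\gamma}\sum_{j:p(j)=\gamma}|\Delta y^j_s|^2$, and by the i.i.d.\ (exchangeable) structure $\EE|\Delta y^j_s|^2$ does not depend on $j$ within a population, so this is $\le \EE|\Delta R^{j}_s|^2$ for a representative $j$. Hence I would not prove the bound for a single index but rather set $g(t):=\EE\sup_{0\le s\le t}\sum_{\alpha}|\Delta R^{i_\alpha}_s|^2$ (equivalently, by exchangeability, $\frac1N\sum_{j=1}^N\EE\sup_{0\le s\le t}|\Delta R^j_s|^2$ up to the fixed constants $c_\gamma$), and obtain $g(t)\le C\int_0^t g(s)\,ds + C\,\varepsilon_N$, where $\varepsilon_N$ collects the martingale/BDG contributions and the fluctuation term from step (c).

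For the fluctuation term (c), the key point is that $\widetilde y^j_t$, $j$ in population $\gamma$, are i.i.d.\ copies of $y^\gamma_t$ with values in $[0,1]$, so $\EE\big|\frac1{N_\gamma}\sum_{j:p(j)=\gamma}\widetilde y^j_t-\EE[y^\gamma_t]\big|^2=\frac1{N_\gamma}\mathrm{Var}(y^\gamma_t)\le\frac1{N_\gamma}\le\frac1{c_\gamma N}$. To upgrade the pointwise-in-$t$ variance bound to a bound on $\EE\sup_{0\le t\le T}(\cdots)^2$ — needed because the interaction enters a $dt$-integral inside a $\sup$ — I would either note that this term sits under a time integral (so $\sup_t$ of the integral is controlled by $\int_0^T$ of the integrand, no sup inside needed) or, more cleanly, use the explicit formula \eqref{expression-overline-y-gamma}: since $\frac1{N_\gamma}\sum_{j:p(j)=\gamma}\widetilde y^j_t$ and $\EE[y^\gamma_t]$ are both smooth functionals of $V$-paths of the same type, one can bound the time-integrated difference by $C/(c_\gamma N)$ plus something already controlled by $g$. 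The martingale terms (from the $dW^{i,V}$, $dW^{i,y}$, $dW^{i,x}$ stochastic integrals of coefficient-differences) are handled by Burkholder--Davis--Gundy, turning $\EE\sup$ of the stochastic integral into $\EE$ of its quadratic variation, which is again Lipschitz-controlled by $\int_0^t g(s)\,ds$ as above. Assembling everything yields $g(T)\le C/N$ after Gronwall, and multiplying by $N$ gives \eqref{ineq:vitesse-cv-propag-chaos}. The main obstacle is purely bookkeeping: correctly organising the telescoping decomposition of the interaction drift so that every piece is \emph{either} of the form (constant)$\times|\Delta R|^2$ (feeding Gronwall), \emph{or} a martingale (killed by expectation/BDG), \emph{or} the genuinely $O(1/N)$ exchangeable variance term — while keeping the constants uniform in $N$, which is where hypothesis \eqref{hypo:proportion} is used.
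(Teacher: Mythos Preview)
Your telescoping decomposition (a)--(b)--(c) of the interaction drift is essentially the same as the paper's, and your treatment of step~(c) is correct (the limit copies $\widetilde y^j$ are genuinely independent of $\widetilde V^{i},J^{i\gamma}$ for $j\neq i$, so the variance bound $C/N$ goes through, as in the paper's term $\zeta^{1_\alpha}$). The gap is in step~(b), and the symptom is your claimed rate: you assert $g(T)\le C/N$, whereas the proposition only states $g(T)\le C/\sqrt{N}$. Your argument, as written, would prove something strictly stronger than the statement --- which it cannot.

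Concretely: after Young's inequality, the step-(b) contribution to $|\Delta R^i_t|^2$ is of the form
\[
\int_0^t (\widetilde V^i_s-\overline V^{\alpha\gamma})^2\,(J^{i\gamma}_s)^2\,\Big(\tfrac{1}{N_\gamma}\sum_{j}(\widetilde y^j_s-y^j_s)\Big)^2\,ds.
\]
The prefactor $(\widetilde V^i_s-\overline V^{\alpha\gamma})^2(J^{i\gamma}_s)^2$ is \emph{unbounded} (only finite moments) and is \emph{correlated} with $\Delta y^j_s$ for every~$j$, because the $N$-neuron component $y^j$ depends on the entire interacting system, hence on the Brownian motions driving neuron~$i$. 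You cannot therefore bound the expectation of this product by a constant times $\EE|\Delta y^j_s|^2$, and exchangeability does not help: it makes $\EE[\text{prefactor}\cdot|\Delta y^j_s|^2]$ independent of $j$, but does not decouple the two factors. Your Gronwall loop does not close. (A related issue lurks in your step~(a): bounding by ``moment bounds on $J^{i\gamma}$'' would hit the same correlation problem; what actually works there is that the term has a \emph{sign}, $-(\Delta V^i)^2 J^{i\gamma}\tfrac1{N_\gamma}\sum_j y^j\le 0$, and can be discarded.)

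The paper's remedy is precisely to avoid taking expectations too early (they say so explicitly): they keep the pathwise estimate, introduce the scalar $\overline{\delta y}_t=\tfrac1N\sum_j(y^j_t-\widetilde y^j_t)$, derive a separate Gronwall-type inequality for $(\overline{\delta y}_t)^2$, and then decouple the unbounded prefactor from $(\overline{\delta y}_s)^2$ by writing $\tfrac1N\sum_i(J^{i\gamma}_s)^2(\widetilde V^i_s-\overline V)^2$ as its mean plus a fluctuation. The fluctuation has variance $O(1/N)$, but since it is multiplied by $(\overline{\delta y}_s)^2$ (only bounded by $1$ a.s., not yet known to be small), one controls only its $L^1$-norm by $C/\sqrt N$ via Cauchy--Schwarz. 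This is exactly where the $\sqrt N$ in \eqref{ineq:vitesse-cv-propag-chaos} comes from, and why your $C/N$ is not attainable by this route.
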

The above $L^2(\Omega)$-estimate obviously implies that the law of any subsystem of size $k$ $$\left((R^{1_\alpha,N}_t,\alpha\in\PP ),\ldots,(R^{k_\alpha,N}_t,\alpha\in\PP )\right)$$  
with $p(i_\alpha) = \alpha$, converges to the law $\mathbb{P}^{\otimes k}$ when the $N_\alpha$s tend to $\infty$. In other words, the reordered system 
$$\left((R^{i_\alpha,N}_t,\alpha\in\PP ), p(i_\alpha) = \alpha, i_\alpha\in\{1,\ldots,N\}\right)$$
is $\mathbb{P}$-chaotic. 

\begin{proof}[Proof of Proposition \ref{prop:propagationChaos}. ]
~\paragraph{A short  discussion on our methodology. }
We only
present the main ideas of the proof which follows and adapts Sznitman 
\cite{sznitman} or M{\'e}leard ~\cite{meleard}. 
To help the reader follow the lengthy calculations, we start with an explanation of  the main 
differences between our problem where some of the coefficients of our stochastic differential equations are not globally  Lipschitz continuous and the classical Lipschitz coefficients framework. 
In a nutshell, we are dealing with a particle system of the  generic form 
$$ dX^i_t = f(X^i_t,\tfrac{1}{N}\sum_{j=1}^N\phi(X^j_t))dt 
+ \sigma(X^i_t)dW^i_t, $$
where the Brownian motions $W^i$ are independent,
and the functions $\phi$, $f$ and $\sigma$ are such that one gets existence 
and strong uniqueness of a
solution with finite moments, as well as the existence and 
strong uniqueness of the mean field limit
$$ dX_t = f(X_t,\EE\phi(X_t))dt + \sigma(X_t)dW^1_t. $$
Now, let $(\widetilde{X}^i_t)$ be independent copies 
of $(X_t)$ driven by the Brownian motions $W^i$. Under strong 
enough Lipschitz hypotheses on $\phi$, $f$ and $\sigma$, one typically 
obtains, for some $C>0$ and all $0\leq t\leq T$,
$$ \EE |X^i_t-\widetilde{X}^i_t|^2
\leq C \int_0^t \EE |X^i_\theta - \widetilde{X}^i_\theta |^2 d\theta
+ C \int_0^t \EE |\EE\phi(\widetilde{X}^1_\theta) - 
\tfrac{1}{N}\sum_{j=1}^N\phi(\widetilde{X}^j_\theta)|^2~d\theta. $$
Using independence arguments one readily gets that
$$\EE\big|\EE\phi(\widetilde{X}^1_\theta) - 
\tfrac{1}{N}\sum_{j=1}^N\phi(\widetilde{X}^j_\theta)\big|^2
\leq \dfrac{C}{N}.$$
Using Gronwall's lemma, one deduces that
$$ \EE |X^i_t-\widetilde{X}^i_t|^2 \leq \dfrac{C}{N}. $$
This method fails when one of  $f$, $\sigma$ or  $\phi$ is not 
globally Lipschitz. 

In our case the drift $f$ is not globally Lipschitz, but of the form  (see equations  \eqref{eq:drift-B}, \eqref{eq:drift-b}, \eqref{eq:kernel-k})
\[f(t,v,\bx,j,\tfrac{1}{N}\sum_{i=1}^N y^i) = F_\alpha(t,v,\bx) -  j (v- \overline{V}^{\alpha \gamma})  (\tfrac{1}{N}\sum_{i=1}^N y^i).\]
The fact that the first part $F_\alpha$ of the drift is only one-sided Lipschitz is easy to overcome.  To handle the second part $-  j (v- \overline{V}^{\alpha \gamma})  (\tfrac{1}{N}\sum_{i=1}^N y^i)$ we make use of the following three properties: 
\begin{itemize}
\item the processes $J_t^{\alpha\gamma}$ are positive,
\item the processes $y^i_t$ belong to $[0,1]$,
\item in the dynamics of $V_t$, the term  $-  j (v- \overline{V}^{\alpha \gamma})  (\tfrac{1}{N}\sum_{i=1}^N y^i)$ acts as a mean reverting term, stabilizing the moments of  $V_t$. 
\end{itemize}
Notice that because in our case the function $f$ is not globally Lipschitz,  the convergence rate for \newline
$\EE\left[ \sup_{0\leq t\leq T} \sum_{\alpha \in \PP }
|R^{i_\alpha,N}_t-{\tR}^{i_\alpha}_t|^2\right]$ is of the order of  $1/\sqrt{N}$,
as indicated by the inequality \eqref{ineq:vitesse-cv-propag-chaos}, instead of $1/N$ in the Lipschitz case.

\paragraph{Details of our  proof. } We now turn to the proof of Inequality~(\ref{ineq:vitesse-cv-propag-chaos}).

By the symmetry of the coupled systems, we can fix the index set  $(1_\alpha, \alpha\in  \PP )$ and rewrite the SDEs \eqref{eq:N-neuron-sign-preserving} and \eqref{eq:N-neuron-sign-preserving-limit} in the following condensed form:  for all $\alpha\in \PP $,
\begin{align}\label{eq:R-eds}
R^{1_\alpha,N}_t -{\tR}^{1_\alpha}_t =  \int_0^t \left( \sigma(R^{1_\alpha,N}_s)  - \sigma({\tR}^{1_\alpha}_s)\right) d {\bW}^{1_\alpha}_s + \int_0^t \left( B[s,R^{1_\alpha,N}_s; \mu^N_s]  - B[s,{\tR}^{1_\alpha}_s;\mathbb{P}_s]\right) ds
\end{align}
where we have introduced the empirical measure $\mu^N_\cdot = 
\dfrac{1}{N}\displaystyle\sum_{j=1}^N \delta_{R^j_\cdot}$, the Brownian 
motion $(\bW^{1_\alpha}_t) = 
(W^{1_\alpha,V}_t, 
(B_t^{1_\alpha\gamma},\gamma \in \PP),W^{1_\alpha,y}_t,W^{1_\alpha,x}_t)$, 
and the time-marginal law $\mathbb{P}_s = \mathbb{P} \circ ({\tR}^{1_\alpha}_s,\alpha\in\PP )^{-1}$.

We denote by $r$ the canonical variable on $\er^\ell$,  that we decompose in the following set of $\bar{p}$ coordinates on $\er^{\bar{p}+6}$:  
$$r :=(r^\alpha, \alpha\in\PP ) = 
\left(v^\alpha,(j^{\alpha\gamma};~\gamma\in\PP ),y^\alpha,w^\alpha, x^\alpha;~\alpha\in\PP \right).$$
According to Remark \ref{rmrk:FN+HH}, the diffusion coefficient is 
defined as  
\begin{align*}
\sigma(r^\alpha)= \sigma(v^\alpha,(j^{\alpha\gamma},\gamma \in \PP ), y^\alpha, w^\alpha,x^\alpha)= \left(\begin{array}{l}
\sigma^\alpha\\
\left( \sigma^J_{\alpha\gamma}\sqrt{j^{\alpha\gamma}}, \gamma \in \PP  \right) \\
\sqrt{ \left| a^\alpha_r S_\alpha(v^{\alpha}) (1-y^{\alpha})  
	+ a^\alpha_dy^{\alpha}\right| } \; \chi(y^{\alpha}) \\
0 \\
\sqrt{\displaystyle | \rho_{x}(v^\alpha)(1-x^\alpha) 
+ \zeta_{x}(v^\alpha)x^\alpha|} \;\chi(x^\alpha) 
\end{array}\right)
\end{align*}
and is Lipschitz on the state subspace
of the process $(V^\alpha_t,y^\alpha_t, w^\alpha_t, x^\alpha_t)$.  
The drift coefficient is defined as 
\begin{align}\label{eq:drift-B}
B[t,r^\alpha;\mu]:= b(t,r^\alpha) + k[r^\alpha;\mu]
\end{align}
where
\begin{align}\label{eq:drift-b}
b(t,r^\alpha)= b(v^\alpha,(j^{\alpha\gamma},\gamma \in \PP ), y^\alpha, w^\alpha,x^\alpha)= \left(\begin{array}{l}
F_\alpha(t, v^\alpha , \bx^\alpha)\\  
\left( \theta_{\alpha\gamma}
\left( \bar{J}^{\alpha\gamma} 
- j^{\alpha\gamma} \right),\gamma\in\PP \right)\\
\left(a^\alpha_r S_\alpha(v^{\alpha}) 
(1-y^{\alpha})  - a^\alpha_dy^{\alpha} \right)\\
c_\alpha(v^\alpha + a_\alpha 
- b_\alpha w^\alpha)\\
\left(\rho_{x}(v^\alpha)(1-x^\alpha) 
- \zeta_{x}(v^\alpha )x^\alpha\right)
\end{array}\right)
\end{align}
is one-sided Lipschitz in the sense of \eqref{cond:one-sided-lipschitz} in Hypothesis \ref{hypo:coeff}-(iii), and $k$ is defined as follows. For any probability measure $\mu$ on $\er^{l}$, 
\begin{align}\label{eq:kernel-k}
k[r^\alpha;\mu]=  
\left(\begin{array}{l}
- \displaystyle\int_{\er^\ell} \displaystyle \sum_{\gamma\in\PP } (v^\alpha - \overline{V}^{\alpha\gamma}) j^{\alpha\gamma} \frac{1}{c_\gamma} \indi{\{\eta=\gamma\}}  y^{\eta} \mu(d(r^\eta;~\eta\in\PP ))
\\  
0\\
0\\
0\\
0
\end{array}\right).
\end{align}

\begin{remark}
Notice that the characteristic function $\indi{\{\eta=\gamma\}}$ is  
unnecessary in the above definition but, combined  with the 
hypothesis  \eqref{hypo:proportion} that fixes the constants $\{c_\gamma;\gamma\in\PP \}$, 
 it has the advantage of matching the notations in equations 
 \eqref{eq:N-neuron-simple-conductance} and 
 \eqref{eq:N-neuron-sign-preserving}, which helps identifying the 
 interaction kernel in the limit equations. 
The measure $\mu(d(r^\eta;~\eta\in\PP ))$ is on $\er^\ell$  
whose state coordinates are here labeled in $\PP $. 
\end{remark}

In all the sequel $C$ is a positive constant which may  change from line 
to line and is independent of $N$ and $0\leq t\leq T$,  but  may depend of $T$.

\paragraph{Step 1. We prove that the processes $V^i_t$ have bounded 
moments of any positive order.  }
We start with reminding the reader that the CIR processes 
$(J^{i\gamma})$ have bounded 
moments of any positive order when their initial conditions enjoy the 
same property (see e.g. Lemma 2.1 in Alfonsi \cite{alfonsi}).  In view 
of the 
hypotheses~\ref{hypo:coeff}-(i) and (iv), one can show that
the same is true for the processes $(V^{i})$ and $(\widetilde{V}^{i})$
by using the following argument.
Apply the It{\^o} 
formula to $(V^{i})^{2q}$, $q\geq 1$, till time $\tau_n = \inf\{t\geq 
0; |V^{i}_t|\geq n\}$, and take expectations to get 
\begin{align*}
\EE[(V^{i}_{t\wedge \tau_n})^{2q}] & = \EE[(V^{i}_0)^{2q}] + 2q 
\int_0^{t} \EE[\indi{\{s\leq \tau_n\}}(V^{i}_s)^{2q-1} F_\alpha(t, 
V^{i}_s , 
\bx^{i}_s)] ds \\  
&\quad\quad - 2q \int_0^{t}\EE\Big[\indi{\{s\leq \tau_n\}}\sum_{ 
\gamma \in \PP }  (V^{i}_s)^{2q-1}(V^{i}_s - \overline{V}^{\alpha 
\gamma})
\dfrac{{J}^{i\gamma}_s}{N_\gamma} 
\left(\sum_{j=1}^N 
\indi{\{p(j)=\gamma\}}y^{j}_s\right) \Big]ds \\
&\quad\quad 
+ q (2q-1)\int_0^{t}\EE\left[\indi{\{s\leq \tau_n\}}(V^{i}_s)^{2q-2} 
(\sigma_\alpha)^2\right] ds.
\end{align*}
We then observe that 
\[- 2q \int_0^{t}\EE\Big[\indi{\{s\leq \tau_n\}}\sum_{ \gamma \in \PP 
}  (V^{i}_s)^{2q}
\dfrac{{J}^{i\gamma}_s}{N_\gamma} 
\Big(\sum_{j=1}^N 
\indi{\{p(j)=\gamma\}}y^{j}_s\Big) \Big]ds \]
is negative  and can be ignored. It then remains to use the 
hypotheses~\ref{hypo:coeff} and classical arguments to deduce that
$\EE\left[(V^{i}_t)^{n}\right]$ is finite for all $n\geq 1$.

\paragraph{Step 2. A first bound for the random variables  $|R^{1_\alpha,N}_t 
-{\tR}^{1_\alpha}_t|^2$ and $(\tfrac{1}{N} \sum_{i=1}^N (y^i_t - 
\ty^i_t))^2$. } 
Because of the polynomial  form of the non Lipschitz part of the drift, it is not a good idea to introduce the expectation too early in the calculation of the bound for $|R^{1_\alpha,N}_t -{\tR}^{1_\alpha}_t|^2$ or $(\tfrac{1}{N} \sum_{i=1}^N (y^i_t - \ty^i_t))^2$, since calculations with expectation lead to terms of the type $\EE[(R^{1_\alpha,N}_t -{\tR}^{1_\alpha}_t)^2 H]$, where $H$ is an unbounded random variable correlated with $R^{1_\alpha,N}_t$. We therefore postpone taking expectations to Step 3. 

Apply It{\^o}'s formula to $|R^{1_\alpha,N}_t 
-{\tR}^{1_\alpha}_t|^2$. We 
obtain
\begin{align*}
|R^{1_\alpha,N}_t -{\tR}^{1_\alpha}_t|^2 = & 
\; 2  \int_0^t \left( B[s,R^{1_\alpha,N}_s; \mu^N_s]  - 
B[s,{\tR}^{1_\alpha}_s;\mathbb{P}_s]\right) (R^{1_\alpha,N}_s 
-{\tR}^{1_\alpha}_s) ds \\
& + \int_0^t | \sigma(R^{1_\alpha,N}_s) - 
\sigma({\tR}^{1_\alpha}_s)|^2 ds + M^{1_\alpha,N}_t,
\end{align*}
where
\[
M^{1_\alpha,N}_t=2\int_0^t (R^{1_\alpha,N}_s -{\tR}^{1_\alpha}_s) \left( \sigma(R^{1_\alpha,N}_s)  - \sigma({\tR}^{1_\alpha}_s)\right) d {\bW}^{1_\alpha}_s
\]
is a martingale.
By It{\^o} isometry and the result in Step 1 above, 
$ \sup_{0\leq t\leq T}\EE |M^{1_\alpha,N}_t|^2 \leq C$.\\
Applying the  Lipschitz and one-sided-Lipschitz properties for $\sigma$ 
and $b$, we obtain  
\begin{align}\label{eq:couplage_aux}
\begin{aligned}
|R^{1_\alpha,N}_t -{\tR}^{1_\alpha}_t|^2 \leq & 
\; 2  \int_0^t \left( k[s,R^{1_\alpha,N}_s; \mu^N_s]  - 
k[s,{\tR}^{1_\alpha}_s;\mathbb{P}_s]\right) (R^{1_\alpha,N}_s 
-{\tR}^{1_\alpha}_s) ds \\
& + C \int_0^t |R^{1_\alpha,N}_s -{\tR}^{1_\alpha}_s|^2 ds 
+ M^{1_\alpha,N}_t.
\end{aligned}
\end{align}
Now, we are interested in $\left( k[s,R^{1_\alpha,N}_s; \mu^N_s]  - 
k[s,{\tR}^{1_\alpha}_s;\mathbb{P}_s]\right) (R^{1_\alpha,N}_s 
-{\tR}^{1_\alpha}_s) $. We introduce the empirical measure of the 
coupling system $\widetilde{\mu}^N_\cdot = \displaystyle 
\frac{1}{N}\sum_{i=1}^N
\delta_{{\tR}^i_\cdot}$ and write 
\begin{align}\label{eq:decompoCouplage}
\left( k[R^{1_\alpha,N}_s; \mu^N_s]  - k[{\tR}^{1_\alpha}_s;\mathbb{P}_s]\right)=\left( k[R^{1_\alpha,N}_s; \mu^N_s]  - k[{\tR}^{1_\alpha}_s;\widetilde{\mu}^N_s]\right) + 
\left( k[{\tR}^{1_\alpha}_s; \widetilde{\mu}^N_s]  - k[{\tR}^{1_\alpha}_s;\mathbb{P}_s]\right).
\end{align} 
We consider in turn the two terms in the right-hand side of \eqref{eq:decompoCouplage}.

First, from the definition of $k$ in \eqref{eq:kernel-k} we get 
\begin{align*}
& \left( k[R^{1_\alpha,N}_s; \mu^N_s]  - k[{\tR}^{1_\alpha}_s;\widetilde{\mu}^N_s]\right)(R^{1_\alpha,N}_s -{\tR}^{1_\alpha}_s)\\
& = \Big( \frac{1}{N} \sum_{j=1}^N
\sum_{\gamma\in\PP } \Big[ 
-(V^{1_\alpha}_s - \overline{V}^{\alpha\gamma}) J^{1_{\alpha}\gamma}_s 
\frac{1}{c_\gamma} \indi{\{p(j) = \gamma\}}  y^j_s 
+ (\widetilde{V}^{1_\alpha}_s - \overline{V}^{\alpha\gamma}) 
{J}^{1_{\alpha}\gamma}_s  \frac{1}{c_\gamma}\indi{\{p(j) = \gamma\}}  
\widetilde{y}^j_s  
\Big]\Big)(V^{1_\alpha}_s - \tV^{1_\alpha}_s)\\
& =   - (V^{1_\alpha}_s - \tV^{1_\alpha}_s)^2 
 \Big(\sum_{\gamma\in\PP } 
 J^{1_{\alpha}\gamma}_s \frac{1}{N} \sum_{j=1}^N
\frac{1}{c_\gamma} \indi{\{p(j) = \gamma\}}  y^j_s \Big) \\
& \qquad + 
(V^{1_\alpha}_s - \tV^{1_\alpha}_s)\Big( \sum_{\gamma\in\PP } {J}^{1_{\alpha}\gamma}_s (\widetilde{V}^{1_\alpha}_s - \overline{V}^{\alpha\gamma})
 \frac{1}{N} \sum_{j=1}^N
 \frac{1}{c_\gamma}\indi{\{p(j) = \gamma\}}  (\widetilde{y}^j_s - 
 y^j_s) \Big). 
\end{align*}
Since the $(J^{1_{\alpha}\gamma}_t)$  and  the $(y^i_t,i=1,\ldots,N)$
are positive, the first term in the right-hand side is negative.  
We bound the second term by using Young's inequality: 
\begin{align} \label{ineq:delta-k}
& \left(k[R^{1_\alpha,N}_s; \mu^N_s]  - 
k[{\tR}^{1_\alpha}_s;\widetilde{\mu}^N_s]\right)(R^{1_\alpha,N}_s 
-{\tR}^{1_\alpha}_s) \nonumber \\
& \leq \frac{1}{2} (V^{1_\alpha}_s - \tV^{1_\alpha}_s)^2 + 
\frac{1}{2} \Big( \sum_{\gamma\in\PP } {J}^{1_{\alpha}\gamma}_s 
(\widetilde{V}^{1_\alpha}_s - \overline{V}^{\alpha\gamma})
 \frac{1}{N} \sum_{j=1}^N
 \frac{1}{c_\gamma}\indi{\{p(j) = \gamma\}}  (\widetilde{y}^j_s - 
 y^j_s) \Big)^2. 
\end{align}

Next we consider the second contribution coming from the right-hand side 
of \eqref{eq:decompoCouplage}. By Young's inequality
\begin{align}\label{ineq:decompoCouplage2}
 \left(k[{\tR}^{1_\alpha}_s;\widetilde{\mu}^N_s] - k[{\tR}^{1_\alpha}_s;\mathbb{P}_s] \right)(R^{1_\alpha,N}_s 
-{\tR}^{1_\alpha}_s) 
 \leq \frac{1}{2} |R^{1_\alpha,N}_s 
-{\tR}^{1_\alpha}_s|^2 + \frac{1}{2}  (\zeta^{1_\alpha}_s)^2,
\end{align}
where
\[\zeta^{1_\alpha}_s:= k[{\tR}^{1_\alpha}_s; \widetilde{\mu}^N_s]  - 
k[{\tR}^{1_\alpha}_s;\mathbb{P}_s]\] 
is such that $\sup_{0\leq t\leq T}\EE| \zeta^{1_\alpha}_s|^2 \leq \tfrac{C}{N}$.  Indeed, as the $({\tR}^{i})$ 
are  i.i.d.  with law $\mathbb{P}$,  
$k[{\tR}^{1_\alpha}_s;\mathbb{P}_s]$ is the conditional expectation
\begin{align*}
k[{\tR}^{1_\alpha}_s;\mathbb{P}_s] &  
= \EE \left[k({\tR}^{1_\alpha}_s,{\tR}^j_s) \Big/ \sigma( {\tR}^{1_\alpha}_u; u\leq s) \right] 
\end{align*}
for any $j \neq 1_\alpha$, where we have set  $k(r^\alpha,r^\eta)= 
\sum_{\gamma\in\PP } (v^\alpha - \overline{V}^{\alpha\gamma}) 
j^{\alpha\gamma} \frac{1}{c_\gamma} \indi{\{\eta = \gamma\}}  y^\eta$, 
and the  symbol $\sigma$ stands for sigma algebra (which must not be 
confused with the above diffusion coefficient). Thus 
\begin{align*}
\EE | \zeta^{1_\alpha}_s |^2 
& = \EE \Big| \frac{1}{N} \sum_{j= 1}^N  k({\tR}^{1_\alpha}_s, {\tR}^{j}_s)  - \EE
\left[k({\tR}^{1_\alpha}_s,{\tR}^{j}_s) \Big/ \sigma( {\tR}^{1_\alpha}_u; u\leq s) \right] \Big| ^2 \\
& \leq 2 \EE \Big| \frac{1}{N} \sum_{j \neq 1_\alpha } k({\tR}^{1_\alpha}_s, {\tR}^{j}_s)  - \EE\left[k({\tR}^{1_\alpha}_s,{\tR}^{j}_s)\Big/ \sigma( {\tR}^{1_\alpha}_u; u\leq s) \right] \Big|^2 \\
& \quad + \frac{2}{N}\EE \Big|  k({\tR}^{1_\alpha}_s,{\tR}^{1_\alpha}_s) - \EE[k(r^\alpha,{\tR}^{1_\alpha}_s)]\Big| _{\{r^\alpha = {\tR}^{1_\alpha}_s\}} \Big|^2 \leq \frac{C}{N}. 
\end{align*}

Combine \eqref{eq:couplage_aux} with the last inequalities \eqref{ineq:delta-k},
\eqref{eq:decompoCouplage} and  \eqref{ineq:decompoCouplage2}, 
\begin{align*}
\begin{aligned}
|R^{1_\alpha,N}_t -{\tR}^{1_\alpha}_t|^2 \leq & 
 C\int_0^t |R^{1_\alpha,N}_s -{\tR}^{1_\alpha}_s|^2 ds \\
 & +  C\int_0^t \sum_{\gamma\in\PP } 
 (J^{1_{\alpha}\gamma}_s)^2 
(\widetilde{V}^{1_\alpha}_s - \overline{V}^{\alpha\gamma})^2
\Big(\frac{1}{N} \sum_{j=1}^N
  (\widetilde{y}^j_s - y^j_s) \Big)^2ds 
+ M^{1_\alpha,N}_t +  \frac{1}{2}  (\zeta^{1_\alpha}_t)^2.
\end{aligned}
\end{align*}
By Gronwall's lemma and integration by parts,
it comes
\begin{equation} \label{ineq:delta-R-recap}
|R^{1_\alpha,N}_t -{\tR}^{1_\alpha}_t|^2 
\leq C\int_0^t \sum_{\gamma\in\PP } 
 (J^{1_{\alpha}\gamma}_s)^2 
(\widetilde{V}^{1_\alpha}_s - \overline{V}^{\alpha\gamma})^2
\Big(\frac{1}{N} \sum_{j=1}^N
  (\widetilde{y}^j_s - y^j_s) \Big)^2~ds + Z^{1_\alpha}_t,
 \end{equation}
where for all $t\in[0,T]$, since $(M^{1_\alpha,N}_t)$ is a martingale, 
$$\EE [Z^{1_\alpha}_t]  = \EE\Big[M^{1_\alpha,N}_t +  \frac{1}{2}  (\zeta^{1_\alpha}_t )^2+\int_0^t C\exp(C(t-s)) \big(M^{1_\alpha,N}_s +  \frac{1}{2}  (\zeta^{1_\alpha}_s)^2\big) ds \Big]\leq C \sup_{0\leq t\leq T}\EE| \zeta^{1_\alpha}_s|^2 \leq \frac{C}{N}.$$

We now set
$$\dyb_t  = \frac{1}{N} \sum_{i=1}^N  (y^i_t - \ty^i_t). $$ 
Defining drift and diffusion for processes $y^i$ by 
\begin{align*}
b^\alpha_y(y,v)  &=a^\alpha_r S_\alpha(v) (1-y) - a^\alpha_d y\\
\sigma^\alpha_y(y,v)  &= \sqrt{ \left| a^\alpha_r S_\alpha(v) (1-y)  
	+ a^\alpha_d y\right| } \; \chi(y), 
\end{align*}
we have 
\begin{align*}
(\dyb_t)^2  = & \left( \int_0^t  \tfrac{1}{N}\sum_{i=1}^N  (b^\alpha_y(y_s^i,V_s^i)  - b^\alpha_y(\ty_s^i,\tV_s^i) ) ds + \int_0^t   \tfrac{1}{N}\sum_{i=1}^N  (\sigma^\alpha_y(y_s^i,V_s^i)  - \sigma^\alpha_y(\ty_s^i,\tV_s^i) ) dW_s^i \right)^2 \\
\leq & 2 \left( \int_0^t  \tfrac{1}{N}\sum_{i=1}^N  (b^\alpha_y(y_s^i,V_s^i)  - b^\alpha_y(\ty_s^i,\tV_s^i) ) ds\right)^2 +  2 \left( \int_0^t  \tfrac{1}{N}\sum_{i=1}^N  (\sigma^\alpha_y(y_s^i,V_s^i)  - \sigma^\alpha_y(\ty_s^i,\tV_s^i) ) dW_s^i \right)^2. 
\end{align*}
Notice that the processes $(\overline{Z}_t)$, defined  by 
$$ \overline{Z}_t := 2 \Big( \int_0^t  \tfrac{1}{N}\sum_{i=1}^N  (\sigma^\alpha_y(y_s^i,V_s^i)  - \sigma^\alpha_y(\ty_s^i,\tV_s^i) ) dW_s^i \Big)^2$$ 
is such that 
$\sup_{0\leq t\leq T}\EE (\overline{Z}_t)^2 \leq \tfrac{C}{N}. $
Since $b^\alpha_y$ and $\sigma^\alpha_y$ are Lipschitz on $[0,1]\times \er$, we get 
\begin{align*}
(\dyb_t)^2 \leq & \int_0^t  C ( \dyb_s)^2  ds + \int_0^t  C  \Big(\tfrac{1}{N}\sum_{i=1}^N  |V^i_s - \tV^i_s| \Big)^2 ds +\overline{Z}_t\\
\leq & \int_0^t  C  (\dyb_s)^2  ds + \int_0^t  \tfrac{C}{N}\sum_{i=1}^N  |R^{i,N}_s - \tR^i_s|^2 ds +\overline{Z}_t
\end{align*}
Combining again Gronwall's lemma  and integration by parts we obtain
\begin{align}\label{ineq:delta_y}
\dyb_t^2 \leq & \int_0^t  \tfrac{C}{N}\sum_{i=1}^N  |R^{i,N}_s - \tR^i_s|^2 ds  + \int_0^t C e^{C(t-s)} \Big(   \int_0^s   \tfrac{C}{N}\sum_{i=1}^N  |R^{i,N}_\theta - \tR^i_\theta|^2  d\theta\Big) ds +\overline{Z}_t  +  \int_0^t C e^{C(t-s)} \overline{Z}_s ds \nonumber \\
 \leq & C \int_0^t \tfrac{1}{N}\sum_{i=1}^N  |R^{i,N}_s - \tR^i_s|^2 ds  +\overline{Z}_t  +  \int_0^t C e^{C(t-s)} \overline{Z}_s ds. 
\end{align}

\paragraph{Step 3. The bound for $\EE\left[ \sup_{0\leq t\leq T} \sum_{\alpha \in \PP } 
|R^{i_\alpha,N}_t-{\tR}^{i_\alpha}_t|^2\right]$.} 

Combining the last inequality \eqref{ineq:delta_y} with \eqref{ineq:delta-R-recap}, it comes 
\begin{align*}
\dyb_t^2 & \leq 
C \int_0^t \int_0^s \Big(\tfrac{C}{N}\sum_{i=1}^N  \sum_{\gamma\in\PP } 
 (J^{i\gamma}_\theta)^2 
(\widetilde{V}^{i}_\theta  - \overline{V}^{\alpha\gamma})^2\Big)  (\dyb_\theta)^2 d\theta ds  +\int_0^t  \tfrac{C}{N}\sum_{i=1}^N  Z^{i}_s ds + \overline{Z}_t  +  \int_0^t C e^{C(t-s)} \overline{Z}_s ds\\
& \quad = C \int_0^t (t-s) \Big(\tfrac{C}{N}\sum_{i=1}^N  \sum_{\gamma\in\PP } 
 (J^{i\gamma}_s)^2 
(\widetilde{V}^{i}_s  - \overline{V}^{\alpha\gamma})^2\Big)  (\dyb_s)^2 ds  +\int_0^t  \tfrac{C}{N}\sum_{i=1}^N  Z^{i}_s ds + \overline{Z}_t  +  \int_0^t C e^{C(t-s)} \overline{Z}_s ds\\
& \quad = C \int_0^t (t-s)\EE\Big[ \sum_{\gamma\in\PP } 
 (J^{1\gamma}_s)^2 
(\widetilde{V}^{1}_s  - \overline{V}^{\alpha\gamma})^2\Big]  (\dyb_s)^2 ds  + \gamma_t +\int_0^t  \tfrac{C}{N}\sum_{i=1}^N  Z^{i}_s ds + \overline{Z}_t  +  \int_0^t C e^{C(t-s)} \overline{Z}_s ds \\
\end{align*}
where
$$\gamma_t  := C \int_0^t (t-s) \Big\{ (\tfrac{C}{N}\sum_{i=1}^N  \sum_{\gamma\in\PP } 
 (J^{i\gamma}_s)^2 
(\widetilde{V}^{i}_s  - \overline{V}^{\alpha\gamma})^2) - \EE[ \sum_{\gamma\in\PP } 
 (J^{1\gamma}_s)^2 
(\widetilde{V}^{1}_s  - \overline{V}^{\alpha\gamma})^2]\Big\}  (\dyb_s)^2 ds$$
is such that $\sup_{0\leq t\leq T}\EE (\gamma_t)^2 \leq \dfrac{C}{N}$, 
since $(\dyb_s)^2\leq 1$ a.s.. Taking the expectation of the last inequality, we get 
\begin{align*}
\EE [\dyb_t^2] & \leq  C \int_0^t (t-s)\EE\Big[ \sum_{\gamma\in\PP } 
 (J^{1\gamma}_s)^2 
(\widetilde{V}^{1}_s  - \overline{V}^{\alpha\gamma})^2\Big]  \EE[\dyb_s]^2 ds  + \dfrac{C}{\sqrt{N}} \\
&\leq  \dfrac{C}{\sqrt{N}} 
\end{align*}
by applying again Gronwall's lemma in the case of a non-decreasing remainder.  Coming back to \eqref{ineq:delta-R-recap},  we get 
\begin{align*}
\EE\Big[\tfrac{1}{N}\sum_{i=1}^N  |R^{i,N}_t - \tR^i_t|^2\Big]
 \leq & C \int_0^t  \EE[\sum_{\gamma\in\PP } 
 (J^{1\gamma}_s)^2 
(\widetilde{V}^{1}_s - \overline{V}^{\alpha\gamma})^2]  \EE[(\dyb_s)^2] ds  +   \tfrac{1}{N}\sum_{i=1}^N  \EE[Z^{i}_t] + \EE[\beta_t] 
\end{align*}
where again 
$$\beta_t  :=C \int_0^t \Big\{ (\tfrac{1}{N}\sum_{i=1}^N  \sum_{\gamma\in\PP } 
 (J^{i\gamma}_s)^2 
(\widetilde{V}^{i}_s  - \overline{V}^{\alpha\gamma})^2) - \EE[ \sum_{\gamma\in\PP } 
 (J^{1\gamma}_s)^2 
(\widetilde{V}^{1}_s  - \overline{V}^{\alpha\gamma})^2]\Big\}  (\dyb_s)^2 ds,$$
is such that $\sup_{0\leq t\leq T}\EE (\beta_t)^2 \leq \dfrac{C}{N}$.
Using \eqref{hypo:proportion}, this ends the proof of the proposition. 
\end{proof}

\section{Conclusion}
In this note we have set the work published in \cite{baladron-al} on a totally rigorous footing. In doing so we also have shed some new light on the way to incorporate noise in the ion channels equations for the Hodgkin-Huxley model and in the amount of neurotransmitters at the synapses in both the Hodgkin-Huxley and Fitzhugh-Nagumo models. 

The techniques in this paper could be extended to a more generic form of interaction kernel $k[r;\mu]$ in \eqref{eq:kernel-k}. 
Notice also that the hypothesis \ref{hypo:coeff}-(iii) should allow to prove the convergence in time to equilibrium of the mean field limits.  

\subsection*{Acknowledgements}
This work was partially supported by the European Union Seventh Framework Programme (FP7/2007-2013) under grant agreement no. 269921 (BrainScaleS), no. 318723 (Mathemacs), and by the ERC advanced grant NerVi no. 227747.

\bibliographystyle{plain}

\end{document}